\documentclass[12pt]{article}
\usepackage[top=1in,bottom=1in,left=1in,right=1in]{geometry}
\usepackage[T1]{fontenc}
\usepackage{lmodern}
\usepackage{microtype}
\usepackage{indentfirst}
\usepackage{amsfonts,amsmath,amsthm,amssymb,mathtools,bm}
\usepackage{enumitem}
\usepackage{needspace}
\usepackage{longtable}
\usepackage[caption=false]{subfig}
\usepackage{leftidx}
\usepackage{lineno}
\usepackage{color,xcolor}
\usepackage{array}
\usepackage{latexsym}
\usepackage{float}
\usepackage{supertabular,booktabs}
\usepackage{rotating}
\usepackage{cases}
\usepackage{multicol,multirow}
\usepackage{epsfig}
\usepackage{cite}
\usepackage{fancyhdr}
\usepackage{dsfont}
\usepackage{enumitem}
\usepackage{hyperref}

\hypersetup{
	colorlinks=true,
	linkcolor=red,
	filecolor=blue,
	urlcolor=red,
	citecolor=blue,
	pdftitle={The Matrix Pythagorean Equation over GL2(Z)},
	pdfauthor={Hongjian Li},
	pdfsubject={The matrix Pythagorean equation over GL2(Z)},
	pdfkeywords={matrix Pythagorean equation, GL2Z, SL2Z, orbit decomposition, integral conjugacy, canonical representatives}
}

\allowdisplaybreaks
\numberwithin{equation}{section}

\newtheorem{theorem}{Theorem}[section]

\newtheorem{lemma}{Lemma}[section]

\newtheorem{corollary}{Corollary}[section]
\newtheorem{remark}{Remark}[section]

\def\G1{G^\mathcal{C}}

\newcommand{\SL}{{\rm SL}}
\newcommand{\GL}{{\rm GL}}

\newcommand{\tr}{\operatorname{tr}}

\begin{document}
\title{The Matrix Pythagorean Equation over $\GL_2(\mathbb Z)$}
\author{
Hongjian Li$^{1,}$\footnote{E-mail\,$:$ lhj@gdufs.edu.cn. Supported by the Project of Guangdong University of Foreign Studies (Grant No. 2024RC063).}\quad
Weilin Zhang$^{2,}$\footnote{Corresponding author. E-mail\,$:$ weilin@m.scnu.edu.cn.}\\
{\small\it $^{1}$School of Mathematics and Statistics, Guangdong University of Foreign Studies,}\\
{\small\it Guangzhou 510006, Guangdong, P. R. China}\\
{\small\it $^{2}$School of Mathematics and Information Science, Guangzhou University,}\\
{\small\it Guangzhou 510006, Guangdong, P. R. China}
}
\date{}
\maketitle
\date{}

\noindent{\bf Abstract}\quad
In this paper, we study the ordered solutions of the matrix Pythagorean equation $X^2+Y^2=Z^2$ over $\GL_2(\mathbb Z)$. Exploiting the independent sign symmetry of the equation, we first reduce the full solution set to the trace-nonnegative subset $\mathcal M$, consisting of those solutions for which all three traces are nonnegative. We then determine a canonical decomposition of $\mathcal M$ into orbits under simultaneous integral conjugacy.

\medskip \noindent{\bf Keywords} Matrix Pythagorean equation; General linear group; Orbit decomposition

\medskip
\noindent{\bf MR(2020) Subject Classification} 15A24; 15B36; 11D09; 20H05

\section{Introduction}

The classical Fermat equation is
\[
x^n+y^n=z^n,\qquad n\geq3.
\]
Replacing the integer variables by integral matrices leads naturally to the matrix analogue
\[
X^n+Y^n=Z^n.
\]
Early work on matrix Fermat equations includes that of Domiaty \cite{Domiaty1966}, Bolker \cite{Bolker1968}, Gibson \cite{Gibson1970}, and Brenner and de Pillis \cite{BrennerDePillis1972}. Vaserstein subsequently placed related additive and power equations in a broader noncommutative number-theoretic framework \cite{Vaserstein1989}, while Khazanov investigated Fermat equations over $\SL_2(\mathbb Z)$ and related matrix groups \cite{Khazanov1995}. Further results concerning integral $2\times2$ matrices, matrix rings, and matrix groups were obtained by Le and Li \cite{LeLi1995}, Grytczuk \cite{Grytczuk1995}, Qin \cite{Qin1996}, and Patay and Szak\'acs \cite{PataySzakacs2002}. More recently, Chien and Meng studied several structured classes of $2\times2$ matrices \cite{ChienMeng2021}, Li and Yuan established solvability criteria for Fermat and Catalan equations in centralizers of integral $2\times2$ matrices \cite{LiYuan2023}, and Sarma gave a general construction over $M_k(\mathbb Z)$ \cite{Sarma2025}. For background on matrix polynomials and integral matrices, see \cite{GohbergLancasterRodman1982,Newman1972}.

The case of exponent $2$ is particularly natural. The classical Pythagorean equation $x^2+y^2=z^2$ is one of the fundamental Diophantine equations, and its matrix analogue
\[
X^2+Y^2=Z^2
\]
combines arithmetic constraints with genuinely noncommutative phenomena. Arnold and Eydelzon \cite{ArnoldEydelzon2019} constructed a parametric family of integral matrix Pythagorean triples, while explicitly noting that their parametrization does not cover all possible solutions. Thus, the problem of obtaining an exhaustive description is distinct from that of constructing parametric families. In contrast with the scalar setting, classical factorization and parametrization arguments do not carry over directly, since matrix multiplication is generally noncommutative. In the present paper, we restrict attention to $X,Y,Z\in\GL_2(\mathbb Z)$. Within this unimodular $2\times2$ setting, the possible squares of the matrices are strongly constrained by trace and determinant identities, congruence conditions, and integral conjugacy. These additional restrictions provide sufficient rigidity to obtain a complete classification of the ordered solutions over $\GL_2(\mathbb Z)$. For convenience, we let $G=\GL_2(\mathbb Z)$ throughout this paper. Since the equation involves only the squares of $X$, $Y$, and $Z$, it is invariant under independent sign changes:
\[
(X,Y,Z)\longmapsto
(\varepsilon_1X,\varepsilon_2Y,\varepsilon_3Z),
\qquad
\varepsilon_1,\varepsilon_2,\varepsilon_3\in\{\pm1\}.
\]
Moreover, $\tr(-X)=-\tr(X)$, and similarly for $Y$ and $Z$. Hence, for every ordered solution $(X,Y,Z)\in G^3$, one can choose $\varepsilon_1,\varepsilon_2,\varepsilon_3\in\{\pm1\}$ so that $\tr(\varepsilon_1X),\,\tr(\varepsilon_2Y),\,\tr(\varepsilon_3Z)\ge0$. Accordingly, we consider the trace-nonnegative solution set
\[
\mathcal M:=
\{(X,Y,Z)\in G^3:
X^2+Y^2=Z^2,\
\tr(X),\tr(Y),\tr(Z)\ge0\}.
\]
Conversely, every ordered solution in $G^3$ can be recovered from a solution in $\mathcal M$ by independent sign changes. Thus, it suffices to classify $\mathcal M$.

We next consider the natural action of $G$ on $\mathcal M$ by simultaneous integral conjugation. For $P\in G$ and $(X,Y,Z)\in\mathcal M$, define
\[
P\cdot(X,Y,Z)
:=
(PXP^{-1},\,PYP^{-1},\,PZP^{-1}).
\]
This map is well defined. Indeed, since $G$ is closed under conjugation, we have
\[
PXP^{-1},\,PYP^{-1},\,PZP^{-1}\in G.
\]
Moreover,
\[
(PXP^{-1})^2+(PYP^{-1})^2
=
P(X^2+Y^2)P^{-1}
=
PZ^2P^{-1}
=
(PZP^{-1})^2,
\]
and trace is invariant under conjugation. Hence, $P\cdot(X,Y,Z)\in\mathcal M$ for all $P\in G$ and $(X,Y,Z)\in\mathcal M$. Finally, the identity element acts trivially and
\[
(PQ)\cdot(X,Y,Z)
=
P\cdot\bigl(Q\cdot(X,Y,Z)\bigr),
\qquad P,Q\in G,
\]
so this indeed defines a group action of $G$ on $\mathcal M$. For $(X,Y,Z)\in\mathcal M$, we denote its orbit under this action by
\[
\mathcal O_G(X,Y,Z)
:=
\left\{
(PXP^{-1},\,PYP^{-1},\,PZP^{-1})
:\ P\in G
\right\}.
\]
The action of $G$ partitions $\mathcal M$ into pairwise disjoint orbits:
\[
\mathcal M
=
\mathcal O_G(X_1,Y_1,Z_1)
\sqcup
\mathcal O_G(X_2,Y_2,Z_2)
\sqcup\cdots,
\]
where $\mathcal O_G(X_1,Y_1,Z_1),\,\mathcal O_G(X_2,Y_2,Z_2),\,\ldots$ are all the distinct $G$-orbits in $\mathcal M$. Here and throughout this paper, the symbol $\sqcup$ denotes the disjoint union. The main result of this paper gives an explicit decomposition of $\mathcal M$ into $G$-orbits. More precisely, we prove the following theorem.

\begin{theorem}\label{thm:orbit-main}
Let $R=\begin{pmatrix}0&-1\\1&-1\end{pmatrix},\,
U=\begin{pmatrix}-1&1\\-1&2\end{pmatrix},\,
V=\begin{pmatrix}2&-1\\3&-1\end{pmatrix},\,
W=\begin{pmatrix}1&0\\1&1\end{pmatrix}$, and let
\[
\mathcal T
=\left\{\begin{pmatrix}
 a&-a-c\\
 a+b&-a
\end{pmatrix}:(a,b,c)\in\mathbb Z^3,ab+bc+ca=1,a\le b,a<c\right\}.
\]
Then
\begin{equation}\label{eq:main-orbit-decomposition}
\mathcal M
=
\mathop{\sqcup}\limits_{T\in\mathcal T}
\mathcal O_G(-R^2,-R,T)
\sqcup
\mathcal O_G(U,V,W)
\sqcup
\mathcal O_G(V,U,W).
\end{equation}
\end{theorem}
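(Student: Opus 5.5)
The plan is to linearize the equation with Cayley--Hamilton, reduce it to a short list of numerical cases, and then classify pairs up to simultaneous conjugacy. For $A\in G$ write $t_A=\tr(A)$ and $d_A=\det A=\pm1$. Then $A^2=t_AA-d_AI$, so $X^2+Y^2=Z^2$ is equivalent to the linear relation
\[
t_XX+t_YY-t_ZZ=cI,\qquad c:=d_X+d_Y-d_Z\in\{\pm1,\pm3\}.
\]
Taking traces gives $t_X^2+t_Y^2-t_Z^2=2c$. Taking determinants of $t_XX+t_YY-cI=t_ZZ$ gives a second relation. To state it, expand
\[
\det(sX+tY)=d_Xs^2+(t_Xt_Y-m)st+d_Yt^2,\qquad m:=\tr(XY),
\]
which yields
\[
\det(t_XX+t_YY)-c\,(t_X^2+t_Y^2)+c^2=d_Zt_Z^2 .
\]
These two relations, together with $t_X,t_Y,t_Z\ge0$, should cut the possibilities down to a few numerical types.

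\emph{First case: $t_Z=0$.} Then $t_XX+t_YY=cI$. If $t_X=0$ or $t_Y=0$, the relation forces $X$ or $Y$ to be scalar, which is impossible by the trace equation. Otherwise $X$ and $Y$ are rational polynomials in each other, so they commute and lie in one commutative order. Working through $c$ and the trace equation should leave only $t_X=t_Y=1$, $d_X=d_Y=d_Z=1$ and $c=1$, hence $X+Y=I$ with $X$ of order $6$. Every element of order $6$ in $G$ is conjugate to $-R^2$, so after conjugating we get $X=-R^2$ and $Y=I+R^2=-R$. Then $Z$ ranges over all trace-zero, determinant-one matrices, that is, all solutions of $Z^2=-I$.

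\emph{Second case: $t_Z>0$.} Now $Z=(t_XX+t_YY-cI)/t_Z$. I would substitute this into $Z^2=t_ZZ-d_ZI$ and use $XY+YX=t_YX+t_XY+(m-t_Xt_Y)I$. The result is a linear relation in $I,X,Y$. If $X,Y,I$ are linearly independent, all its coefficients must vanish, and this gives a small Diophantine system in $t_X,t_Y,t_Z,m$ and the signs. I expect it to have the unique solution $t_X=t_Y=1$, $t_Z=2$, $d_X=-1$, $d_Y=d_Z=1$, or its mirror with $X$ and $Y$ swapped. This matches $(U,V,W)$ and $(V,U,W)$: there $U^2=U+I$, $V^2=V-I$ and $U+V=2W-I$. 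The dependent subcase reduces to commuting matrices again and should give nothing new, or fold back into the first case.

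\emph{Main obstacle.} The numerical invariants $(t_X,t_Y,d_X,d_Y,m)$ do not determine the pair $(X,Y)$ up to $G$-conjugacy, because over $\mathbb Z$ there can be several classes. In the second case I would treat this as a lattice problem: conjugacy classes of such pairs correspond to $\mathbb Z[X,Y]$-module structures on $\mathbb Z^2$. I would normalize $Y=V$ (every order-$6$-type element with trace $1$, determinant $1$ is conjugate to $V$) and then solve for $X$ in the centralizer quotient, expecting exactly one orbit.

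\emph{Identifying the family $\mathcal T$.} In the first case the stabilizer of $(-R^2,-R)$ is the centralizer of $R$, namely $\{\pm R^k\}$, and $-I$ acts trivially. So the orbits correspond to $\langle R\rangle$-classes of matrices $Z$ with $\tr Z=0$ and $\det Z=1$. Writing $Z=\begin{pmatrix}a&-a-c\\a+b&-a\end{pmatrix}$ gives $\det Z=ab+bc+ca=1$, and conjugation by $R$ should act as the cyclic permutation of $(a,b,c)$. The inequalities $a\le b$, $a<c$ then select exactly one representative per $3$-cycle orbit, noting that $a=b=c$ is impossible. Finally, distinct elements of the list lie in distinct orbits: within $\mathcal T$ this follows from the fundamental-domain argument, and the orbits of $(U,V,W)$ and $(V,U,W)$ are distinguished from each other and from the family by the invariants $(\det X,\tr Z)$.
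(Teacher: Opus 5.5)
\textbf{There is a genuine gap in your second case ($t_Z>0$), exactly where the traces have to be bounded.} Substituting $Z=N/t_Z$, with $N:=t_XX+t_YY-cI$, into $Z^2=t_ZZ-d_ZI$ gives no new information. Since $N$ is non-scalar, Cayley--Hamilton shows that $N^2=t_Z^2N-d_Zt_Z^2I$ is equivalent to $\tr N=t_Z^2$ and $\det N=d_Zt_Z^2$, which are the two relations you already have. Explicitly, the coefficients of $X$ and $Y$ are $t_X(t_X^2+t_Y^2-2c-t_Z^2)$ and $t_Y(t_X^2+t_Y^2-2c-t_Z^2)$, and the constant coefficient is your determinant relation. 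Given the trace relation, the determinant relation reads
\[
(t_Xt_Y-m)\,t_Xt_Y=d_Yt_X^2+d_Xt_Y^2-(d_X+d_Y)^2+1 .
\]
Since $m=\tr(XY)$ is a free integer, this only imposes a divisibility. The system therefore has solutions other than the one you expect:
\begin{itemize}
\item $(t_X,t_Y,t_Z)=(3,5,6)$ with $(d_X,d_Y,d_Z)=(-1,1,1)$, $c=-1$, $m=16$;
\item $(1,1,2)$ with $(d_X,d_Y,d_Z)=(-1,-1,-1)$, $m=6$.
\end{itemize}
The first is even realized by integral matrices. Take $X=\begin{pmatrix}0&1\\1&3\end{pmatrix}$ and $Y=\begin{pmatrix}1&1\\3&4\end{pmatrix}$. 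Both lie in $G$, they are linearly independent with $I$, and $\tr(XY)=16$. Then $Z=(3X+5Y+I)/6=\begin{pmatrix}1&4/3\\3&5\end{pmatrix}$ satisfies $Z^2=X^2+Y^2$, $\tr Z=6$, $\det Z=1$, but $Z\notin M_2(\mathbb Z)$. So no argument that uses only $(t_X,t_Y,t_Z,m)$ and the signs can close this case.

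\textbf{What is missing is the integrality of $Z$.} This is the congruence $t_XX+t_YY\equiv cI\pmod{t_Z}$, equivalently $X^2+Y^2\equiv -d_ZI\pmod{t_Z}$, together with $X^2\equiv -d_XI\pmod{t_X}$ and $Y^2\equiv -d_YI\pmod{t_Y}$. In the example above, $X^2+Y^2$ is not scalar modulo $6$. This, not the multiplicity of conjugacy classes of pairs, is the real obstacle, and it is the core of the paper's proof. The paper proceeds as follows:
\begin{itemize}
\item $T=X^{-2}Y^2$ has trace $-1$ and determinant $1$. By class number one for $\mathbb Z[\omega]$, one may conjugate so that $Y^2=AR$ and $Z^2=-AR^2$ with $A=X^2$.
\item Writing the three congruences in the entries of $A$ shows that $p=\tr X$, $q=\tr Y$, $r=\tr Z$ are pairwise coprime. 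For instance, a common divisor of $p$ and $r$ would make $R$ scalar modulo it.
\item The same congruences show that $pqr$ divides two integers $\lambda,\mu$ with $3\lambda^2+\mu^2=4(t^2+tu+u^2-3)$, where $t=\tr(X^2)$ and $u=\tr(Y^2)$.
\item Comparing sizes forces $\min(p,q)=1$, and a short case check leaves $p=q=1$, $r=2$.
\item The $(-1,-1,-1)$ candidate is eliminated because, writing $\lambda=pqrk$ and $\mu=pqrl$, one would need $3k^2+l^2=24$, which has no integer solutions.
\end{itemize}
You need an input of this kind before your normalization $Y=V$ can be applied. The rest of your plan is fine and agrees with the paper: the first case, the identification of $\mathcal T$ via the centralizer of $R$ and cyclic permutation of $(a,b,c)$, and the disjointness via $(\det X,\tr Z)$.
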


The paper is organized as follows. Section~\ref{sec:structural} establishes the auxiliary results needed for the proof. Section~\ref{sec:main-proof} proves Theorem~\ref{thm:orbit-main}.

\section{Preliminaries}\label{sec:structural}

Throughout the remainder of the paper, we retain the notation $G,\mathcal M,R,U,V,W$, and $\mathcal T$ from the Introduction. We also set
\[
J:=\begin{pmatrix}0&-1\\1&0\end{pmatrix}.
\]
Then
\[
R^2=\begin{pmatrix}-1&1\\-1&0\end{pmatrix},\qquad
R^2+R+I=0,\qquad R^3=I,\qquad J^2=-I.
\]

We first recall the integral-conjugacy facts needed below.

\begin{lemma}[Latimer--MacDuffee~\cite{Newman1972}, Theorem~III.13]\label{lem:latimer-macduffee}
Let $f(x)\in\mathbb Z[x]$ be a monic polynomial of degree $n$ that is irreducible over $\mathbb Q$, and let $\theta$ be a root of $f(x)$. Then there is a one-to-one correspondence between the $\GL_n(\mathbb Z)$-similarity classes of matrices $A\in M_n(\mathbb Z)$ satisfying $f(A)=0$ and the ideal classes of the ring $\mathbb Z[\theta]$.
\end{lemma}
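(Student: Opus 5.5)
We would prove the lemma by the classical eigenvector construction. We set up mutually inverse maps between $\GL_n(\mathbb Z)$-similarity classes of integral matrices $A$ with $f(A)=0$ and classes of nonzero fractional ideals of $\mathbb Z[\theta]$, where two such ideals are identified when they differ by a factor in $\mathbb Q(\theta)^\times$. Since Newman's statement is not restricted to invertible ideals, we understand ``ideal classes'' in this sense: full-rank $\mathbb Z[\theta]$-submodules of $\mathbb Q(\theta)$ modulo nonzero scalars.

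\emph{From matrices to ideals.} Let $A\in M_n(\mathbb Z)$ with $f(A)=0$. Since $f$ is irreducible of degree $n$, it is the minimal and characteristic polynomial of $A$. Hence $\theta$ is an eigenvalue of $A$, and there is a nonzero $v=(\omega_1,\dots,\omega_n)^{T}\in\mathbb Q(\theta)^n$ with $Av=\theta v$. Clearing denominators, we may take $\omega_i\in\mathbb Z[\theta]$.

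Put $\mathfrak a_A=\mathbb Z\omega_1+\cdots+\mathbb Z\omega_n$. The relation $\theta\omega_i=\sum_j a_{ij}\omega_j$ shows that $\mathfrak a_A$ is stable under multiplication by $\theta$, so it is a $\mathbb Z[\theta]$-module. We then check that $\omega_1,\dots,\omega_n$ are $\mathbb Q$-linearly independent, so that $\mathfrak a_A$ is a lattice of rank $n$ and hence a (fractional) ideal.

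\emph{Independence, the key point.} The $\mathbb Q$-span $S$ of the $\omega_i$ is a nonzero $\mathbb Q[\theta]=\mathbb Q(\theta)$-subspace of the field $\mathbb Q(\theta)$. It therefore contains some $\omega\neq0$, and then $S\supseteq\mathbb Q(\theta)\omega=\mathbb Q(\theta)$, which has dimension $n$. Since $S$ is spanned by $n$ vectors, they are independent.

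\emph{Well-definedness.} The eigenspace of $\theta$ is one-dimensional over $\mathbb Q(\theta)$, because $\theta$ is a simple root. So a different choice of $v$ replaces $\mathfrak a_A$ by $\lambda\mathfrak a_A$ with $\lambda\in\mathbb Q(\theta)^\times$. If $B=PAP^{-1}$ with $P\in\GL_n(\mathbb Z)$, then $Pv$ is a $\theta$-eigenvector of $B$. Its entries span the same $\mathbb Z$-module, because $P$ is unimodular. Thus the class of $\mathfrak a_A$ depends only on the similarity class of $A$.

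\emph{From ideals to matrices.} Given a fractional ideal $\mathfrak a$, choose a $\mathbb Z$-basis $\omega_1,\dots,\omega_n$; it has rank $n$ because $\mathfrak a$ contains $\mathbb Z[\theta]c$ for some nonzero $c$. Since $\theta\mathfrak a\subseteq\mathfrak a$, there is $A\in M_n(\mathbb Z)$ with $\theta\omega=A\omega$, and applying $f$ gives $f(A)\omega=f(\theta)\omega=0$.

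Because the $\omega_i$ are $\mathbb Q$-independent, each row of $f(A)$ is a rational vector annihilating $\omega$, so $f(A)=0$. Changing the basis replaces $A$ by $PAP^{-1}$ with $P\in\GL_n(\mathbb Z)$. Replacing $\mathfrak a$ by $\lambda\mathfrak a$ and using the basis $\lambda\omega_i$ gives the same $A$.

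\emph{Bijectivity.} The two constructions are visibly inverse. Starting from $A$, the vector $v$ is a basis of $\mathfrak a_A$ and returns $A$. Starting from $\mathfrak a$, $\omega$ is a $\theta$-eigenvector of the resulting $A$ and returns $\mathfrak a$.

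For injectivity of the matrix-to-ideal direction, suppose $\mathfrak a_B=\lambda\mathfrak a_A$. Then $\lambda v$ and $w$ are two $\mathbb Z$-bases of the same lattice, so $w=P\lambda v$ for some $P\in\GL_n(\mathbb Z)$. Then $\theta w=P\lambda Av=PAP^{-1}w$, and also $\theta w=Bw$. Independence of the entries of $w$ forces $B=PAP^{-1}$.

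The main obstacle is the independence statement. It is used both to show that $\mathfrak a_A$ has full rank and to recover matrices from the relation $\theta\omega=A\omega$, and the argument above handles both.
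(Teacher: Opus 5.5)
Your proof is correct. It is the classical eigenvector argument for the Latimer--MacDuffee theorem; the paper does not prove this lemma itself but simply cites it from Newman's Theorem~III.13. Your reading of ``ideal classes'' as all fractional ideals of $\mathbb Z[\theta]$ modulo $\mathbb Q(\theta)^\times$, not only the invertible ones, is the reading under which the correspondence holds for non-maximal orders. For the paper's applications to the principal ideal domains $\mathbb Z[\omega]$ and $\mathbb Z[i]$ the distinction makes no difference.
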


\begin{lemma}\label{lem:order3-conj-elementary}
Let $M\in M_2(\mathbb Z)$ satisfy $M^2+M+I=0$. Then there exists $P\in \GL_2(\mathbb Z)$ such that $P^{-1}MP=R$.
\end{lemma}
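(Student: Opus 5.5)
Two routes are available. One is to invoke Lemma~\ref{lem:latimer-macduffee} with $f(x)=x^2+x+1$, which is irreducible over $\mathbb Q$. Then $\mathbb Z[\theta]=\mathbb Z[\omega]$ is the ring of Eisenstein integers, a PID, so there is exactly one ideal class. Hence all integral matrices $M$ with $M^2+M+I=0$ form a single $\GL_2(\mathbb Z)$-similarity class, and since $R$ satisfies this equation, $M$ is conjugate to $R$. Because the lemma is labelled ``elementary'', I would instead give a direct argument that does not rely on class numbers, as follows.

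Write $M=\begin{pmatrix}a&b\\c&d\end{pmatrix}$. By Cayley--Hamilton and $M^2+M+I=0$, the minimal polynomial of $M$ divides $x^2+x+1$, which is irreducible over $\mathbb Q$. So the characteristic polynomial is $x^2+x+1$, giving $a+d=-1$ and $ad-bc=1$. In particular $c\neq0$, since otherwise $a(-1-a)=1$ would have no integer solution. Consider the binary quadratic form attached to $M$, namely $Q(x,y)=cx^2+(d-a)xy-by^2$, whose discriminant is
\[
(d-a)^2+4bc=(a+d)^2-4(ad-bc)=-3.
\]
Conjugating $M$ by $P\in\GL_2(\mathbb Z)$ transforms $Q$ by $P$, up to a sign and determinant twist. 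Moreover, the entries of $M$ are recovered from $Q$ together with the trace.

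The main step is the reduction. I would run the standard descent on $(c,d-a,-b)$. Conjugating by $\begin{pmatrix}1&k\\0&1\end{pmatrix}$ shifts the middle coefficient by $2kc$, and conjugating by $J$ swaps the roles of $c$ and $-b$, up to sign. Iterating, we reach $|d-a|\le|c|\le|b|$. Then $3=4|bc|-(d-a)^2\ge 3c^2$, which forces $|c|=1$ and $|d-a|=1$. This leaves finitely many small matrices, such as $\begin{pmatrix}0&-1\\1&-1\end{pmatrix}$, $\begin{pmatrix}-1&1\\-1&0\end{pmatrix}$, and $\begin{pmatrix}-1&-1\\1&0\end{pmatrix}$. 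Each of these is checked to be conjugate to $R$ by an explicit $P$; for example, $R^2$ is obtained as the conjugate of $R$ by $\begin{pmatrix}0&1\\1&0\end{pmatrix}$, which has determinant $-1$. Using $\GL_2$ rather than $\SL_2$ is exactly what allows all these sign variants to merge into one class.

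The only real obstacle is bookkeeping in the descent: checking that each move strictly decreases $|c|$ or $|d-a|$ until the reduced inequalities hold, and listing the residual cases. Alternatively, I would simply cite the Latimer--MacDuffee route above, with class number $h(\mathbb Z[\omega])=1$, as the complete proof.
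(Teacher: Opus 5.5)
Your first route is exactly the paper's proof: Lemma~\ref{lem:latimer-macduffee} applied to $x^2+x+1$, together with class number one for the Euclidean ring $\mathbb Z[\omega]$. Your preferred elementary route is genuinely different, and it is also correct.

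\textbf{Checking the reduction.} Write $Q_M(v)=\det(v\mid Mv)=cx^2+(d-a)xy-by^2$. Then $Q_{P^{-1}MP}=(\det P)\,Q_M\circ P$. Your two moves act on $(c,\,d-a,\,-b)$ as $(c,\,d-a+2kc,\,\ast)$ (translation) and $(-b,\,a-d,\,c)$ (conjugation by $J$). The translation fixes $|c|$ and the swap fixes $|d-a|$. Each move is applied only when it strictly lowers the other quantity, so $|c|+|d-a|$ strictly decreases and the descent terminates. At the reduced stage, $4bc=-3-(d-a)^2<0$ is what justifies writing $3=4|bc|-(d-a)^2$. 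Also, $|d-a|=1$ rather than $0$ because $d-a=-1-2a$ is odd.

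\textbf{The residual list.} There are exactly four reduced matrices, all with $bc=-1$: $R$, $R^2=SRS^{-1}$, $J^{-1}RJ=\begin{pmatrix}-1&-1\\1&0\end{pmatrix}$, and $R^{T}=DRD^{-1}$. Here $S=\begin{pmatrix}0&1\\1&0\end{pmatrix}$ and $D=\begin{pmatrix}1&0\\0&-1\end{pmatrix}$. Your list omitted $R^{T}$, but it is handled in the same way.

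\textbf{Comparison.} The paper's argument is two lines and matches the template of Lemma~\ref{lem:minusI-conjugacy}, but it rests on an external theorem. Yours is self-contained: it is Gauss reduction, re-proving class number one for discriminant $-3$ in this instance. It also shows structure that the citation hides. $R$ and $R^2$ give a positive and a negative definite form respectively, so they lie in distinct $\SL_2(\mathbb Z)$-classes. These classes are fused only by conjugators of determinant $-1$, which is the same phenomenon the paper later exploits via $H$ in Lemma~\ref{lem:S-explicit}. Finally, the same reduction at discriminant $-4$ would prove Lemma~\ref{lem:minusI-conjugacy} without Latimer--MacDuffee.
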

\begin{proof}
Let $f(x)=x^2+x+1$ and $\omega=\frac{-1+\sqrt{-3}}2$. Then $f(x)$ is irreducible over $\mathbb Q$, and $\mathbb Z[\omega]$ is the ring of Eisenstein integers. This ring is Euclidean with respect to the norm $N(a+b\omega)=a^2-ab+b^2$, and hence is a principal ideal domain \cite{Cohen1993}. Therefore, $\mathbb Z[\omega]$ has only one ideal class. By assumption, $f(M)=M^2+M+I=0$. Moreover, since $R^2+R+I=0$, we have $f(R)=0$. It follows from Lemma~\ref{lem:latimer-macduffee} that $M$ and $R$ belong to the same $\GL_2(\mathbb Z)$-similarity class. Hence there exists $P\in\GL_2(\mathbb Z)$ such that $P^{-1}MP=R$.
\end{proof}

\begin{lemma}\label{lem:minusI-conjugacy}
Let $W\in M_2(\mathbb Z)$ satisfy $W^2=-I$. Then there exists $Q\in \GL_2(\mathbb Z)$ such that $Q^{-1}WQ=J$.
\end{lemma}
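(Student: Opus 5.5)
We follow the same pattern as the proof of Lemma~\ref{lem:order3-conj-elementary}, replacing the Eisenstein integers by the Gaussian integers. Put $g(x)=x^2+1$ and let $i$ be a root. Then $g$ is monic and irreducible over $\mathbb Q$, since it has no rational roots. Moreover $\mathbb Z[i]$ is the ring of Gaussian integers, which is Euclidean with respect to the norm $N(a+bi)=a^2+b^2$ \cite{Cohen1993}. Hence it is a principal ideal domain and has exactly one ideal class.

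Next we check that both matrices in question are roots of $g$. By hypothesis $g(W)=W^2+I=0$, and we already recorded $J^2=-I$, so $g(J)=0$ as well. By Lemma~\ref{lem:latimer-macduffee}, the $\GL_2(\mathbb Z)$-similarity classes of integral matrices annihilated by $g$ are in bijection with the ideal classes of $\mathbb Z[i]$. There is only one such class, so $W$ and $J$ are $\GL_2(\mathbb Z)$-similar. That is, there is $Q\in\GL_2(\mathbb Z)$ with $Q^{-1}WQ=J$.

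No step here is a real obstacle; the only substantive input is that $\mathbb Z[i]$ has class number one. As a sanity check independent of Latimer--MacDuffee, we can argue directly. Write $W=\begin{pmatrix}a&b\\c&-a\end{pmatrix}$ with $-a^2-bc=1$. Conjugating by elementary matrices and running a Euclidean-type descent on $|a|$ reduces to the case $a=0$. Then $bc=-1$, so $W=\pm J$, and $-J=J^{-1}$ is conjugate to $J$ via $\mathrm{diag}(1,-1)$.

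For this descent we would use the reduction step $|a|\le \min(|b|,|c|)/2$. Combined with $a^2+bc=-1$, it forces $a=0$.
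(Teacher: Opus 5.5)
Your main argument is the paper's proof essentially verbatim: $g(x)=x^2+1$ is irreducible, $\mathbb Z[i]$ is Euclidean and so has one ideal class, and Latimer--MacDuffee then puts $W$ and $J$ in the same $\GL_2(\mathbb Z)$-similarity class. Your optional direct check is also sound. The trace-zero form of $W$ follows from Cayley--Hamilton, since $W$ cannot be scalar, and the descent is Gauss reduction of a definite form of discriminant $-4$, with $\min(|b|,|c|)$ strictly decreasing until $|a|\le\min(|b|,|c|)/2$ forces $a=0$.
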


\begin{proof}
Let $g(x)=x^2+1$. Then $g(x)$ is irreducible over $\mathbb Q$, and $\mathbb Z[i]$ is the ring of Gaussian integers. This ring is Euclidean with respect to the norm $N(a+bi)=a^2+b^2$, and hence is a principal ideal domain \cite{Cohen1993}. Therefore, $\mathbb Z[i]$ has only one ideal class. By assumption, $g(W)=W^2+I=0$. Moreover, since $J^2=-I$, we have $g(J)=0$. It follows from Lemma~\ref{lem:latimer-macduffee} that $W$ and $J$ belong to the same $\GL_2(\mathbb Z)$-similarity class. Hence there exists $Q\in\GL_2(\mathbb Z)$ such that $Q^{-1}WQ=J$.
\end{proof}

The following lemma gives a useful relation between a square root in $G$ and its trace and determinant.

\begin{lemma}\label{lem:square-root}
Let $M\in\SL_2(\mathbb Z)$ and let $W\in G$ satisfy $W^2=M$. Then
\[
M+(\det W)I=\tr(W)W,
\qquad
\tr(M)=\tr(W)^2-2\det W.
\]
\end{lemma}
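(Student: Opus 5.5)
The plan is to apply the Cayley--Hamilton theorem to $W$ and then take traces. Since $W$ is a $2\times 2$ matrix, it satisfies its characteristic polynomial:
\[
W^2-\tr(W)W+(\det W)I=0.
\]
Because $W^2=M$, this rearranges at once to the first identity,
\[
M+(\det W)I=\tr(W)W.
\]

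For the second identity, take the trace of the first one and use $\tr(I)=2$:
\[
\tr(M)+2\det W=\tr(W)^2.
\]
Rearranging gives $\tr(M)=\tr(W)^2-2\det W$.

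An equivalent check uses the eigenvalues $\lambda,\mu$ of $W$. Then
\[
\tr(W^2)=\lambda^2+\mu^2=(\lambda+\mu)^2-2\lambda\mu,
\]
which is the same formula.

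Neither step is difficult. The hypothesis $M\in\SL_2(\mathbb Z)$ is not needed for these identities; it only guarantees that $\det W=\pm1$, which the later arguments use. The hypothesis $W\in G$ likewise only ensures that $\tr(W)$ and $\det W$ are integers with $\det W\in\{\pm1\}$. The identities themselves hold for any $2\times2$ matrix $W$ with $W^2=M$.
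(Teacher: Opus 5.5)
Your proposal is correct and matches the paper's proof: apply Cayley--Hamilton to $W$, substitute $W^2=M$ to get the first identity, then take traces using $\tr(I)=2$. Your remark that both identities hold for any $2\times2$ matrix $W$ with $W^2=M$ is also accurate.
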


\begin{proof}
By the Cayley--Hamilton identity,
\[
W^2-\tr(W)W+\det(W)I=0.
\]
Since $W^2=M$, we obtain
\[
M+(\det W)I=\tr(W)W.
\]
Taking traces gives
\[
\tr(M)=\tr(W)^2-2\det W.
\]
\end{proof}

We define
\[
\mathcal S:=\left\{(X,Y,Z)\in G^3:\begin{array}{l}
X^2=A,\,Y^2=AR,\,Z^2=-AR^2\,\text{ for some }A\in\SL_2(\mathbb Z),\\
\tr(X),\tr(Y),\tr(Z)\ge0
\end{array}\right\}.
\]
The following lemma describes the relation between $\mathcal S$ and $\mathcal M$.

\begin{lemma}\label{lem:orbit-reduction}
We have
\[
\mathcal M=\bigcup_{(X,Y,Z)\in\mathcal S}\mathcal O_G(X,Y,Z).
\]
\end{lemma}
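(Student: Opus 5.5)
The proof is a direct double inclusion. The key step for "$\subseteq$" is to show that $X^{-2}Y^2$ satisfies $x^2+x+1=0$, so that Lemma~\ref{lem:order3-conj-elementary} lets us conjugate it to $R$.

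\textbf{The inclusion $\supseteq$.} Take $(X,Y,Z)\in\mathcal S$ with $X^2=A$, $Y^2=AR$ and $Z^2=-AR^2$. Since $R^2+R+I=0$,
\[
X^2+Y^2=A(I+R)=-AR^2=Z^2 .
\]
The traces are nonnegative by the definition of $\mathcal S$, so $(X,Y,Z)\in\mathcal M$. The introduction shows that $\mathcal M$ is stable under the $G$-action, so the whole orbit $\mathcal O_G(X,Y,Z)$ lies in $\mathcal M$.

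\textbf{The inclusion $\subseteq$.} Let $(X,Y,Z)\in\mathcal M$ and put $A=X^2$, $B=Y^2$, $C=Z^2$.

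1. Since $\det X,\det Y,\det Z\in\{\pm1\}$, each square has determinant $1$. Hence $A,B,C\in\SL_2(\mathbb Z)$ and $A+B=C$.

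2. Set $N:=A^{-1}B\in\SL_2(\mathbb Z)$. Then $C=A(I+N)$, and taking determinants gives $\det(I+N)=1$.

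3. For any $2\times2$ matrix, $\det(I+N)=1+\tr(N)+\det(N)$. With $\det N=1$ this becomes $2+\tr N$, so $\tr N=-1$.

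4. By Cayley--Hamilton, $N^2-\tr(N)N+\det(N)I=0$, which here reads $N^2+N+I=0$.

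5. Lemma~\ref{lem:order3-conj-elementary} gives $P\in G$ with $P^{-1}NP=R$.

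6. Apply $P^{-1}$ to the triple: set $(X',Y',Z')=P^{-1}\cdot(X,Y,Z)$ and $A'=P^{-1}AP=X'^2$. Then
\[
Y'^2=P^{-1}ANP=A'R,\qquad Z'^2=A'(I+R)=-A'R^2 .
\]
Here $A'\in\SL_2(\mathbb Z)$, and the traces of $X',Y',Z'$ equal those of $X,Y,Z$, so they are nonnegative. Hence $(X',Y',Z')\in\mathcal S$.

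7. Finally, $(X,Y,Z)=P\cdot(X',Y',Z')$ lies in $\mathcal O_G(X',Y',Z')$.

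There is no serious obstacle; the only points needing care are the determinant identity for $I+N$ and the action convention (acting by $P^{-1}$ rather than $P$).
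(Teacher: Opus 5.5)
Your proposal is correct and follows essentially the same argument as the paper: the same trivial inclusion $\supseteq$ via $R^2+R+I=0$, and for $\subseteq$ the same reduction to $N=A^{-1}B$ with $\det(I+N)=1$, hence $\tr N=-1$ and $N^2+N+I=0$, followed by Lemma~\ref{lem:order3-conj-elementary} and conjugation by $P^{-1}$. Your handling of the action convention, $(X,Y,Z)=P\cdot(X',Y',Z')$, also matches the paper exactly.
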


\begin{remark}
The union in Lemma~\ref{lem:orbit-reduction} is an ordinary union and need not be disjoint, since distinct elements of $\mathcal S$ may belong to the same $G$-orbit. Here and throughout the remainder of the paper, the symbol $\cup$ denotes an ordinary union, whereas $\sqcup$ denotes a disjoint union.
\end{remark}

\begin{proof}
Let $(X,Y,Z)\in\mathcal S$. Then for some $A\in\SL_2(\mathbb Z)$, we have
\[
X^2=A,\qquad
Y^2=AR,\qquad
Z^2=-AR^2.
\]
Since $R^2+R+I=0$, we have
\[
X^2+Y^2
=
A+AR
=
A(I+R)
=
-AR^2
=
Z^2.
\]
Since $\tr(X),\tr(Y),\tr(Z)\ge0$ by the definition of $\mathcal S$, it follows that $(X,Y,Z)\in\mathcal M$. Since simultaneous integral conjugation preserves the equation and the traces, it follows that
\[
\mathcal O_G(X,Y,Z)\subseteq\mathcal M.
\]
Hence
\[
\bigcup_{(X,Y,Z)\in\mathcal S}
\mathcal O_G(X,Y,Z)
\subseteq\mathcal M.
\]

Conversely, let $(X,Y,Z)\in\mathcal M$, and set
\[
A_0=X^2,\qquad B_0=Y^2,\qquad C_0=Z^2.
\]
Then $A_0,B_0,C_0\in\SL_2(\mathbb Z)$ and $A_0+B_0=C_0$. Put $T=A_0^{-1}B_0\in\SL_2(\mathbb Z)$. Since $C_0=A_0(I+T)$ and $\det A_0=\det C_0=1$, we obtain $\det(I+T)=1$. For a $2\times2$ matrix,
\[
\det(I+T)=1+\tr(T)+\det(T).
\]
Since $\det(T)=1$, it follows that $\tr(T)=-1$. By the Cayley--Hamilton theorem,
\[
T^2+T+I=0.
\]
Lemma~\ref{lem:order3-conj-elementary} therefore yields a matrix $P\in G$ such that $P^{-1}TP=R$. Set
\begin{equation}\label{eq1}
X_1=P^{-1}XP,\qquad
Y_1=P^{-1}YP,\qquad
Z_1=P^{-1}ZP,
\qquad
A=P^{-1}A_0P.
\end{equation}
Then $A\in\SL_2(\mathbb Z)$ and
\[
X_1^2
=
(P^{-1}XP)^2
=
P^{-1}X^2P
=
P^{-1}A_0P
=
A,
\]
while
\[
Y_1^2
=
(P^{-1}YP)^2\\
=
P^{-1}Y^2P\\
=
P^{-1}B_0P\\
=
P^{-1}A_0TP\\
=
(P^{-1}A_0P)(P^{-1}TP)\\
=
AR.
\]
Moreover,
\[
\begin{aligned}
Z_1^2&=(P^{-1}ZP)^2=P^{-1}Z^2P=P^{-1}C_0P=P^{-1}(A_0+B_0)P\\
&=P^{-1}A_0P+P^{-1}B_0P=A+AR=A(I+R)=-AR^2.
\end{aligned}
\]
Since trace is invariant under conjugation, $\tr(X_1),\tr(Y_1),\tr(Z_1)\ge0$. Thus $(X_1,Y_1,Z_1)\in\mathcal S$. By \eqref{eq1}, we have $(X,Y,Z)=P\cdot(X_1,Y_1,Z_1)$. So $(X,Y,Z)\in\mathcal O_G(X_1,Y_1,Z_1)$. Therefore
\[
\mathcal M
\subseteq
\bigcup_{(X,Y,Z)\in\mathcal S}
\mathcal O_G(X,Y,Z),
\]
which proves the result.
\end{proof}

We next establish some basic properties of $\mathcal S$.

\begin{lemma}\label{lem:trace-divisibility}
Let $(X,Y,Z)\in\mathcal S$. Then the following statements hold:
\begin{enumerate}
\item[\textup{(i)}] $\tr(X)$ and $\tr(Y)$ are odd, whereas $\tr(Z)$ is even.

\item[\textup{(ii)}] If $\tr(Z)=0$, then $\det X=\det Y=\det Z=1$ and $X^2=R$.

\item[\textup{(iii)}] If $\tr(Z)\ne0$, set $t=\tr(X)^2-2\det X$ and $u=\tr(Y)^2-2\det Y$. Then there exist $m,n\in\mathbb Z$ such that
\[
\bigl(\tr(X)\tr(Y)\tr(Z)\bigr)^2(3m^2+n^2)
=
4\bigl(t^2+tu+u^2-3\bigr).
\]
\end{enumerate}
\end{lemma}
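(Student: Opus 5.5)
The plan is to work throughout with the matrix $A\in\SL_2(\mathbb Z)$ from the definition of $\mathcal S$. We write $x=\tr X$, $y=\tr Y$, $z=\tr Z$, and set
\[
t=\tr A,\qquad u=\tr(AR),\qquad w=\tr(-AR^2).
\]
Lemma~\ref{lem:square-root}, applied to $X^2=A$, $Y^2=AR$ and $Z^2=-AR^2$ (all in $\SL_2(\mathbb Z)$), gives three identities:
\[
A+(\det X)I=xX,\qquad AR+(\det Y)I=yY,\qquad -AR^2+(\det Z)I=zZ,
\]
together with $t=x^2-2\det X$, $u=y^2-2\det Y$ and $w=z^2-2\det Z$. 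Since $I+R=-R^2$, we also have $w=t+u$.

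\textbf{Part (i).} The key observation is a congruence mod $4$. If the trace of $X$ is odd, then $x^2\equiv1$, so $t=x^2-2\det X\equiv 1\mp2\equiv3 \pmod 4$. If it is even, then $x^2\equiv0$, so $t\equiv2\pmod4$. The same holds for $u$ and $w$, so each of $t,u,w$ is $\equiv 2$ or $3\pmod 4$. Checking the four cases of $t+u\equiv w\pmod4$ shows the only consistent one is $t\equiv u\equiv3$ and $w\equiv2$. Hence $x,y$ are odd and $z$ is even.

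\textbf{Part (ii).} If $z=0$, Cayley--Hamilton gives $Z^2=-(\det Z)I$. So $-AR^2=-(\det Z)I$, i.e.\ $A=(\det Z)R$ (using $R^3=I$). Then $t=-\det Z$, and $t\equiv3\pmod4$ forces $\det Z=1$ and $A=R$. Now $t=-1=x^2-2\det X$ forces $\det X=1$. Likewise $u=\tr(R^2)=-1=y^2-2\det Y$ forces $\det Y=1$. Finally $X^2=A=R$.

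\textbf{Part (iii).} Write $A=\begin{pmatrix}p&q\\ r&s\end{pmatrix}$, so $t=p+s$. Computing $AR$ directly gives $u=q-r-s$. Expanding and using $ps-qr=1$, I expect the identity
\[
4(t^2+tu+u^2-3)=\bigl(2(p-s)+q-r\bigr)^2+3(q+r)^2 .
\]
I will check this by writing everything as a quadratic form in $p-s$, $q$ and $r$. It then suffices to show that $xyz$ divides both $N:=2(p-s)+q-r$ and $M:=q+r$; then take $n=N/(xyz)$ and $m=M/(xyz)$. Note $xyz\neq0$ because $x,y$ are odd and $z\neq0$.

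Divisibility by each factor separately comes from the three identities above. From $xX=A+(\det X)I$, $x$ divides $q$, $r$ and $p-s$, hence divides $N$ and $M$. From $yY=AR+(\det Y)I$, $y$ divides the off-diagonal entries $-(p+q)$ and $s$ of $AR$ and the difference $q+r+s$ of its diagonal entries. Writing $M=(q+r+s)-s$ and $N=2(p+q)-(q+r+s)-s$ shows $y\mid N$ and $y\mid M$. The analogous computation with $-AR^2$ handles $z$.

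\textbf{Main obstacle: coprimality.} To pass from divisibility by each factor to divisibility by the product, I need $x,y,z$ to be pairwise coprime. Suppose a prime $\ell$ divides two of them, say $x$ and $y$. Then $A\equiv\pm I$ and $AR\equiv\pm I\pmod\ell$, so $R\equiv\pm I\pmod\ell$. The same conclusion arises from any other pair, via $R^2\equiv\pm I$ or $R\equiv\pm I$, and $R^2\equiv\pm I$ implies $R\equiv R^4\equiv I \pmod \ell$. But $R\mp I$ has entries $\pm1$, so it cannot vanish modulo any prime. Hence $x,y,z$ are pairwise coprime, $xyz$ divides $N$ and $M$, and the stated identity follows.
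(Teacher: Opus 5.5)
Your proposal is correct and follows essentially the same route as the paper. Part (i) uses the mod-$4$ argument coming from $\tr(Z^2)=\tr(X^2)+\tr(Y^2)$, and part (ii) the reduction $A=(\det Z)R$. Part (iii) writes $4(t^2+tu+u^2-3)$ as a norm form whose two linear forms are divisible by each trace, then uses pairwise coprimality of the traces. The differences are cosmetic: your forms $N,M$ are the paper's $\lambda,\mu$ changed by an Eisenstein unit ($N=(3\lambda-\mu)/2$, $M=(\lambda+\mu)/2$), and your coprimality argument (a common prime would force $R\equiv\pm I$) is a tidier version of the paper's entrywise congruence check.
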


\begin{proof}
Choose $A\in\SL_2(\mathbb Z)$ such that
\[
X^2=A,\qquad Y^2=AR,\qquad Z^2=-AR^2,
\]
and put $p=\tr(X)$, $q=\tr(Y)$, and $r=\tr(Z)$. Since $X,Y,Z\in G$,
we have $\det X,\det Y,\det Z\in\{\pm1\}$. Moreover,
$A,AR,-AR^2\in\SL_2(\mathbb Z)$. By Lemma~\ref{lem:square-root},
\[
A+(\det X)I=pX,\qquad
AR+(\det Y)I=qY,\qquad
-AR^2+(\det Z)I=rZ.
\]
Taking traces gives
$\tr(A)=p^2-2\det X$, $\tr(AR)=q^2-2\det Y$, and
$\tr(-AR^2)=r^2-2\det Z$.

Since $R^2+R+I=0$, we have $-AR^2=A+AR$. Hence
\begin{equation}\label{eq:trace-key-relation}
p^2+q^2-r^2
=
2(\det X+\det Y-\det Z).
\end{equation}
Since $\det X,\det Y,\det Z\in\{\pm1\}$, the integer $\det X+\det Y-\det Z$ is odd. Hence the right-hand side of \eqref{eq:trace-key-relation} is congruent to $2$ modulo $4$. Since every square is congruent to $0$ or $1$ modulo $4$, it follows that
\[
p^2\equiv q^2\equiv1\pmod4,
\qquad
r^2\equiv0\pmod4.
\]
Thus $p$ and $q$ are odd, whereas $r$ is even, proving
\textup{(i)}. Since $p,q\ge0$, we also have $p,q>0$.

For \textup{(ii)}, suppose that $r=0$. From
$-AR^2+(\det Z)I=rZ$ we obtain $AR^2=(\det Z)I$, and hence, using
$R^3=I$,
\[
A=(\det Z)R.
\]
If $\det Z=-1$, then $A=-R$ and $\tr(A)=1$. Thus
$1=p^2-2\det X$, which gives either $p^2=3$ or $p^2=-1$ according as
$\det X=1$ or $-1$, a contradiction. Therefore $\det Z=1$ and $A=R$.

Since $\tr(R)=-1$, we have $-1=p^2-2\det X$, whence
$\det X=1$ and $p^2=1$. Also $AR=R^2$ and $\tr(R^2)=-1$, so
$-1=q^2-2\det Y$, giving $\det Y=1$ and $q^2=1$. Therefore
\[
\det X=\det Y=\det Z=1,
\qquad
X^2=A=R.
\]
This proves \textup{(ii)}.

Finally, suppose that $r\ne0$. By \textup{(i)} and the definition of
$\mathcal S$, we have $p,q,r>0$. The three square-root identities imply
\[
A\equiv-(\det X)I\pmod p,\quad
AR\equiv-(\det Y)I\pmod q,\quad
-AR^2\equiv-(\det Z)I\pmod r.
\]
Write $A=\begin{pmatrix}a&b\\c&d\end{pmatrix}$. Since
\[
AR=\begin{pmatrix}b&-a-b\\ d&-c-d\end{pmatrix},
\qquad
-AR^2=\begin{pmatrix}a+b&-a\\ c+d&-c\end{pmatrix},
\]
the preceding congruences yield
\[
\begin{array}{rcl@{\qquad}rcl@{\qquad}rcl@{\qquad}rcl@{\qquad}l}
a&\equiv&-\det X, &
b&\equiv&0, &
c&\equiv&0, &
d&\equiv&-\det X
&(\bmod\,p),\\
a&\equiv&\det Y, &
b&\equiv&-\det Y, &
c&\equiv&\det Y, &
d&\equiv&0
&(\bmod\,q),\\
a&\equiv&0, &
b&\equiv&-\det Z, &
c&\equiv&\det Z, &
d&\equiv&-\det Z
&(\bmod\,r).
\end{array}
\]

By Lemma~\ref{lem:square-root},
$t=\tr(A)=a+d$ and $u=\tr(AR)=b-c-d$. Hence
$d=t-a$ and $c=a+b-t-u$. Substituting these into
$\det A=ad-bc=1$ gives
\[
a^2+ab+b^2-ta-(t+u)b+1=0.
\]
Set $\lambda=2a+b-t$ and $\mu=3b-t-2u$. A direct calculation gives
\[
3\lambda^2+\mu^2
-4(t^2+tu+u^2-3)
=
12\bigl(a^2+ab+b^2-ta-(t+u)b+1\bigr),
\]
and therefore
\[
3\lambda^2+\mu^2
=
4(t^2+tu+u^2-3).
\]

We first show that each of $p,q,r$ divides both $\lambda$ and $\mu$. Indeed, the above congruences give
\[
\begin{aligned}
t&\equiv -2\det X, &\qquad u&\equiv \det X      &&(\bmod\,p),\\
t&\equiv \det Y,   &\qquad u&\equiv -2\det Y   &&(\bmod\,q),\\
t&\equiv -\det Z,  &\qquad u&\equiv -\det Z    &&(\bmod\,r).
\end{aligned}
\]
Hence $\lambda\equiv\mu\equiv0$ modulo each of $p,q,r$.

It remains to show that $p,q,r$ are pairwise coprime. If a positive
integer $g$ divides both $p$ and $q$, then
$b\equiv0\pmod p$ and $b\equiv-\det Y\pmod q$ imply
$g\mid\det Y$, so $g=1$. Thus $\gcd(p,q)=1$. Similarly,
$c\equiv0\pmod p$ and $c\equiv\det Z\pmod r$ give
$\gcd(p,r)=1$, while $d\equiv0\pmod q$ and
$d\equiv-\det Z\pmod r$ give $\gcd(q,r)=1$.

Since $p,q,r$ are pairwise coprime and each of them divides both $\lambda$ and $\mu$, we obtain $pqr\mid\lambda,\,pqr\mid\mu$. Hence there exist $m,n\in\mathbb Z$ such that $\lambda=pqrm$ and $\mu=pqrn$. Substitution gives
\begin{equation}\label{eq:divisibility-identity}
p^2q^2r^2(3m^2+n^2)
=
4(t^2+tu+u^2-3).
\end{equation}
Since $p=\tr(X)$, $q=\tr(Y)$, and $r=\tr(Z)$, this is precisely
\[
\bigl(\tr(X)\tr(Y)\tr(Z)\bigr)^2(3m^2+n^2)
=
4(t^2+tu+u^2-3),
\]
which proves \textup{(iii)}.
\end{proof}

\begin{lemma}\label{lem:finite-reduced-data}
Define
\[
\mathcal A_-:=
\left\{
\begin{pmatrix}0&1\\-1&3\end{pmatrix},
\begin{pmatrix}2&-1\\-1&1\end{pmatrix},
\begin{pmatrix}2&1\\1&1\end{pmatrix}
\right\}
\
\text{and}
\
\mathcal A_+:=
\left\{
\begin{pmatrix}-2&1\\-3&1\end{pmatrix},
\begin{pmatrix}-2&3\\-1&1\end{pmatrix},
\begin{pmatrix}0&1\\-1&-1\end{pmatrix}
\right\}.
\]
Let $(X,Y,Z)\in\mathcal S$. Then exactly one of the following three cases occurs:
\[
\renewcommand{\arraystretch}{1.2}
\begin{array}{@{}ccc@{}}
\toprule
(\det X,\det Y,\det Z)
&
\bigl(\tr(X^2),\tr(Y^2),\tr(Z^2)\bigr)
&
\text{Condition on }X^2
\\
\midrule
(1,1,1)
&
(-1,-1,-2)
&
X^2=R
\\[2pt]
(-1,1,1)
&
(3,-1,2)
&
X^2\in\mathcal A_-
\\[2pt]
(1,-1,1)
&
(-1,3,2)
&
X^2\in\mathcal A_+
\\
\bottomrule
\end{array}
\]
\end{lemma}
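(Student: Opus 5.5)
The plan is to work with the invariants $p=\tr(X)$, $q=\tr(Y)$, $r=\tr(Z)$ and the determinants, and to split on whether $r=0$, exactly as in Lemma~\ref{lem:trace-divisibility}. Write $A=X^2$, so that $Y^2=AR$ and $Z^2=-AR^2$. The three rows of the table have distinct determinant triples, so ``exactly one'' is automatic once we show that every $(X,Y,Z)\in\mathcal S$ falls into some row.

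\textbf{The case $r=0$.} Lemma~\ref{lem:trace-divisibility}(ii) gives $\det X=\det Y=\det Z=1$ and $A=R$. The traces are then $\tr(R)=-1$, $\tr(R^2)=-1$ and $\tr(-R^3)=\tr(-I)=-2$. This is the first row.

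\textbf{The case $r>0$: bounding $p,q,r$.} By Lemma~\ref{lem:trace-divisibility}(i), $p,q$ are odd positive and $r\ge2$ is even. Put $t=p^2-2\det X$ and $u=q^2-2\det Y$. Since $p$ is odd, $t$ lies in $\{-1,3,7,11,\dots\}$, and in particular $t\ne1$; the same holds for $u$. Lemma~\ref{lem:trace-divisibility}(iii) gives
\[
(pqr)^2(3m^2+n^2)=4(t^2+tu+u^2-3).
\]
I first rule out $3m^2+n^2=0$. That would force $t^2+tu+u^2=3$, and with $t,u\in\{-1,3,7,\dots\}$ the only solution is $t=u=-1$. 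Then $p=q=1$ and $\det X=\det Y=1$. Now \eqref{eq:trace-key-relation} reads $2-r^2=2(2-\det Z)$, which gives $r=0$ or $r^2=-4$, a contradiction.

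Hence $3m^2+n^2\ge1$, and so $p^2q^2r^2\le4(t^2+tu+u^2-3)$. Combining this with $t\le p^2+2$, $u\le q^2+2$ and $r^2=p^2+q^2-2(\det X+\det Y-\det Z)\ge p^2+q^2-6$ gives an inequality of the form
\[
p^2q^2\max(4,\,p^2+q^2-6)\le 4\bigl((p^2+2)^2+(p^2+2)(q^2+2)+(q^2+2)^2-3\bigr).
\]
The left side grows like $p^2q^2(p^2+q^2)$, whereas the right side grows only like $p^4+p^2q^2+q^4$. So only finitely many small odd pairs $(p,q)$ survive; I expect essentially only $p,q\in\{1,3\}$.

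\textbf{Pinning down $(p,q,r)$ and the determinants.} For each surviving $(p,q)$ and each sign pattern, I read off $r$ from \eqref{eq:trace-key-relation}. I then check whether $4(t^2+tu+u^2-3)/(pqr)^2$ is an integer of the form $3m^2+n^2$. The expected outcome is $p=q=1$ and $r=2$, which forces $\det X+\det Y-\det Z=-1$. This leaves three patterns.
\begin{itemize}
\item $(-1,-1,-1)$ gives $t=u=3$. Then $96=16(3m^2+n^2)$, i.e.\ $3m^2+n^2=6$, which has no solution, so this pattern is impossible.
\item $(-1,1,1)$ gives $(t,u)=(3,-1)$. Then $\tr(Z^2)=t+u=2$, which matches the second row.
\item $(1,-1,1)$ gives $(t,u)=(-1,3)$ and $\tr(Z^2)=2$, which matches the third row.
\end{itemize}

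\textbf{Determining $X^2$.} With $(t,u)$ fixed, the entries $(a,b)$ of $A$ satisfy $a^2+ab+b^2-ta-(t+u)b+1=0$. This is a bounded ellipse, so it has finitely many integer points. For each point I set $d=t-a$ and $c=a+b-t-u$. Since $p=q=1$, the conditions mod $p$ and mod $q$ are vacuous, so I impose only the mod-$r$ congruences from the proof of Lemma~\ref{lem:trace-divisibility}(iii), that is, $a\equiv0$ and $b\equiv c\equiv d\equiv1\pmod 2$. The surviving matrices should be exactly $\mathcal A_-$ in the first case and $\mathcal A_+$ in the second.

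\textbf{Main obstacle.} I expect the hardest step to be the size inequality. It must be made sharp enough to cut the search down to a handful of cases, and it must handle the mixed case where one of $p,q$ is $1$ and the other is large. Here I would use the exact value of $r^2$ rather than the crude lower bound, and check the few borderline $(p,q)$ by hand against the representability of $3m^2+n^2$.
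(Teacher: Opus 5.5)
\textbf{There is a genuine gap in the finiteness step.} The outline is sound apart from that step. It splits on $r$, combines \eqref{eq:trace-key-relation} with Lemma~\ref{lem:trace-divisibility}(iii), and finishes with the quadratic in $(a,b)$. Your global disposal of $3m^2+n^2=0$ via $t,u\in\{-1,3,7,\dots\}$ is also correct.

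The problem is the size inequality. Your heuristic that $p^2q^2(p^2+q^2)$ dominates $p^4+p^2q^2+q^4$ holds only when both $p$ and $q$ are large. For $p=1$ your inequality becomes $q^2(q^2-5)\le 4(q^4+7q^2+16)$, which is true for every odd $q$. So all pairs $(1,q)$, and symmetrically all $(p,1)$, survive, not a handful.

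Substituting the exact value $r^2=q^2+1-2(\det X+\det Y-\det Z)$ into the inequality does not help. Both sides are then of order $q^4$, with the right side about four times the left. The identity only forces $3m^2+n^2\approx 4$, which is representable. Hence there is no finite list of borderline pairs to check by hand, and the argument as written does not terminate.

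\textbf{The missing idea is arithmetic rather than metric.} When $p=1$, \eqref{eq:trace-key-relation} gives $r^2-q^2=1-2(\det X+\det Y-\det Z)\in\{7,3,-1,-5\}$. Since $r$ is even and $q$ is odd, factoring $(r-q)(r+q)$ leaves only $(q,r)\in\{(1,2),(3,2),(3,4)\}$ once $r>0$, so $q\le 3$.

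This is how the paper proceeds:
\begin{enumerate}
\item It first shows $\min\{p,q\}=1$ by proving $p^2q^2r^2-4(t^2+tu+u^2-3)>0$ for $p,q\ge 3$ using the exact $r^2$. Your crude bound still lets $(3,3)$ through; the exact values then exclude it.
\item For $p=1$ it factors differences of squares, and it handles $q=1$ by the symmetry $(p,\det X,t)\leftrightarrow(q,\det Y,u)$.
\end{enumerate}

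With this step inserted, your remaining case checks and your determination of $X^2$ go through. The mod-$2$ congruences you impose are automatically satisfied by every integer point of the two ellipses, so they remove nothing.

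There is also one arithmetic slip. For the pattern $(-1,-1,-1)$ one has $(pqr)^2=4$, not $16$, so the equation is $3m^2+n^2=24$ rather than $6$. It is still insoluble, so your conclusion there stands.
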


\begin{proof}
Put $p=\tr(X)$, $q=\tr(Y)$, and $r=\tr(Z)$. By
Lemma~\ref{lem:trace-divisibility}, $p$ and $q$ are odd, whereas $r$
is even.

Suppose first that $r=0$. By Lemma~\ref{lem:trace-divisibility},
$(\det X,\det Y,\det Z)=(1,1,1)$ and $X^2=R$. Hence, by the definition
of $\mathcal S$, $Y^2=R^2$ and $Z^2=-R^3=-I$, so
\[
\bigl(\tr(X^2),\tr(Y^2),\tr(Z^2)\bigr)=(-1,-1,-2).
\]
This gives the first row of the table.

Assume now that $r\ne0$. Since all three traces are nonnegative,
$p,q\ge1$ and $r\ge2$. Set
$t=p^2-2\det X$ and $u=q^2-2\det Y$. By
\eqref{eq:trace-key-relation},
\[
r^2=p^2+q^2-2(\det X+\det Y-\det Z),
\]
and Lemma~\ref{lem:trace-divisibility} yields $m,n\in\mathbb Z$ such
that
\begin{equation}\label{eq:finite-divisibility}
p^2q^2r^2(3m^2+n^2)=4(t^2+tu+u^2-3).
\end{equation}

We claim that $\min\{p,q\}=1$. Suppose, to the contrary, that
$p,q\ge3$, and put
\[
\alpha=\det X,\qquad \beta=\det Y,\qquad \gamma=\det Z.
\]
Then
\[
r^2=p^2+q^2-2(\alpha+\beta-\gamma),\qquad
t=p^2-2\alpha,\qquad u=q^2-2\beta.
\]
Set
\[
L=p^2q^2r^2-4(t^2+tu+u^2-3).
\]
Writing $p^2=9+h$ and $q^2=9+k$, where $h,k\ge0$, a direct expansion
gives
\[
\begin{aligned}
L={}&
h^2k+hk^2+5h^2+5k^2\\
&+(32-2\alpha-2\beta+2\gamma)hk\\
&+(135-2\alpha-10\beta+18\gamma)h\\
&+(135-10\alpha-2\beta+18\gamma)k\\
&+466+54\alpha+54\beta+162\gamma-16\alpha\beta.
\end{aligned}
\]
The coefficients of $hk$, $h$, and $k$ are at least $26$, $105$,
and $105$, respectively, and the constant term is at least
$466-54-54-162-16=180$. Since $h,k\ge0$, it follows that $L>0$.

If $(m,n)\ne(0,0)$, then \eqref{eq:finite-divisibility} gives
$4(t^2+tu+u^2-3)\ge p^2q^2r^2$, contradicting $L>0$. If $m=n=0$,
then $t^2+tu+u^2-3=0$, whereas $t,u\ge7$, again a contradiction.
Hence
\[
\min\{p,q\}=1.
\]

We first consider $p=1$. Then \eqref{eq:trace-key-relation} becomes
\[
r^2-q^2=1-2(\det X+\det Y-\det Z).
\]
Since $q$ is positive and odd and $r$ is positive and even, factoring
the corresponding difference-of-squares equations for the eight
possible determinant triples gives exactly the following candidates:
\[
\begin{array}{@{}cc@{}}
\toprule
(\det X,\det Y,\det Z) & (q,r)\\
\midrule
(1,1,-1)   & (3,2)\\
(1,-1,1)   & (1,2)\\
(-1,1,1)   & (1,2)\\
(-1,-1,1)  & (3,4)\\
(-1,-1,-1) & (1,2)\\
\bottomrule
\end{array}
\]
the remaining three forcing $r=0$.

Three candidates are excluded by \eqref{eq:finite-divisibility}.
For $(\det X,\det Y,\det Z)=(1,1,-1)$, we have
$(p,q,r)=(1,3,2)$ and $(t,u)=(-1,7)$, which would require
$36\mid160$. For $(-1,-1,1)$, we have $(p,q,r)=(1,3,4)$ and
$(t,u)=(3,11)$, which would require $144\mid640$. Finally, for
$(-1,-1,-1)$, we have $(p,q,r)=(1,1,2)$ and $t=u=3$, so
\[
3m^2+n^2=24.
\]
Modulo $3$, this gives $3\mid n$; writing $n=3n_1$ yields
$m^2+3n_1^2=8$, hence $m^2\equiv2\pmod3$, a contradiction.

Thus, when $p=1$, the only possibilities are
\[
(\det X,\det Y,\det Z)\in\{(-1,1,1),(1,-1,1)\},
\qquad p=q=1,\qquad r=2.
\]
Since \eqref{eq:trace-key-relation} and
\eqref{eq:finite-divisibility} are invariant under
$(p,\det X,t)\leftrightarrow(q,\det Y,u)$, the same conclusion holds
when $q=1$. Hence, whenever $r\ne0$,
\[
(\det X,\det Y,\det Z)\in\{(-1,1,1),(1,-1,1)\},
\qquad p=q=1,\qquad r=2.
\]
Using $\tr(M^2)=\tr(M)^2-2\det M$, we obtain
\[
\bigl(\tr(X^2),\tr(Y^2),\tr(Z^2)\bigr)
=
\begin{cases}
(3,-1,2),&(\det X,\det Y,\det Z)=(-1,1,1),\\
(-1,3,2),&(\det X,\det Y,\det Z)=(1,-1,1).
\end{cases}
\]

It remains to determine $X^2$ in these two cases. Write
\[
A=X^2=\begin{pmatrix}a&b\\c&d\end{pmatrix}.
\]
Since $Y^2=AR$,
\[
AR=\begin{pmatrix}b&-a-b\\d&-c-d\end{pmatrix}.
\]
By Lemma~\ref{lem:square-root}, $t=\tr(A)$ and $u=\tr(AR)$, so $d=t-a$ and $c=a+b-t-u$. Thus $\det A=1$ gives
\begin{equation}\label{eq:finite-A-quadratic}
a^2+ab+b^2-ta-(t+u)b+1=0.
\end{equation}

If $(t,u)=(3,-1)$, then
\[
a^2+ab+b^2-3a-2b+1=0,
\]
whose discriminant in $a$ is $\Delta=-3b^2+2b+5$. Thus
$b\in\{-1,0,1\}$. For $b=-1$ one gets $a=2$; for $b=0$ there is no
integral solution; and for $b=1$ one gets $a=0$ or $2$. Since
$d=3-a$ and $c=a+b-2$,
\[
X^2\in
\left\{
\begin{pmatrix}0&1\\-1&3\end{pmatrix},
\begin{pmatrix}2&-1\\-1&1\end{pmatrix},
\begin{pmatrix}2&1\\1&1\end{pmatrix}
\right\}
=\mathcal A_-.
\]

If $(t,u)=(-1,3)$, then
\[
a^2+ab+b^2+a-2b+1=0,
\]
whose discriminant in $a$ is $\Delta=-3b^2+10b-3$. Hence
$b\in\{1,2,3\}$. For $b=1$ one gets $a=0$ or $-2$; for $b=2$ there
is no integral solution; and for $b=3$ one gets $a=-2$. Since
$d=-1-a$ and $c=a+b-2$,
\[
X^2\in
\left\{
\begin{pmatrix}-2&1\\-3&1\end{pmatrix},
\begin{pmatrix}-2&3\\-1&1\end{pmatrix},
\begin{pmatrix}0&1\\-1&-1\end{pmatrix}
\right\}
=\mathcal A_+.
\]

Thus every $(X,Y,Z)\in\mathcal S$ belongs to one of the three rows in
the statement. Since their determinant triples are distinct, the three
cases are mutually exclusive. This proves the lemma.
\end{proof}

We next give an explicit description of the set $\mathcal S$.

\begin{lemma}\label{lem:S-explicit}
Let $H=\begin{pmatrix}-1&1\\0&1\end{pmatrix}$ and $\mathcal T_{J}=\{PJP^{-1}:P\in G\}$. Then
\begin{align}
\mathcal S
&={}\{(-R^2,-R,T):T\in\mathcal T_{J}\}\label{eq:S-first}\\
&\quad\cup\left\{(R^{-k}UR^k,R^{-k}VR^k,R^{-k}WR^k):k=0,1,2\right\}\label{eq:S-second}\\
&\quad\cup\left\{(H^{-1}R^{-k}VR^kH,H^{-1}R^{-k}UR^kH,H^{-1}R^{-k}WR^kH):k=0,1,2\right\}.\label{eq:S-third}
\end{align}
\end{lemma}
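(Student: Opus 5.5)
The plan is to use Lemma~\ref{lem:finite-reduced-data} to split $\mathcal S$ into its three determinant types, and then to recover $X,Y,Z$ from their squares using Lemma~\ref{lem:square-root}. The key observation is that the traces are forced in every case: $p=q=1$, and $r=0$ or $r=2$. The identities $A+(\det X)I=pX$, $AR+(\det Y)I=qY$ and $-AR^2+(\det Z)I=rZ$ then determine $X$ and $Y$ uniquely from $A=X^2$, and they determine $Z$ too whenever $r\neq0$. Afterwards we check the reverse inclusion by verifying that every listed triple lies in $\mathcal S$.

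\emph{First row $(1,1,1)$.} Here $X^2=A=R$. From $\tr(X^2)=p^2-2$ we get $p=1$, so $X=R+I=-R^2$. Similarly $Y^2=R^2$ with $q=1$ and $\det Y=1$, so $Y=R^2+I=-R$. Next $Z^2=-R^3=-I$, and Lemma~\ref{lem:minusI-conjugacy} gives $Z\in\mathcal T_J$. Conversely, take any $T=PJP^{-1}$. Then $T\in G$, $T^2=-I$ and $\tr T=0$. With $A=R$ we have $(-R^2)^2=R^4=R=A$, $(-R)^2=R^2=AR$ and $T^2=-I=-AR^2$. All traces equal $1$, $1$ or $0$, so $(-R^2,-R,T)\in\mathcal S$. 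This gives \eqref{eq:S-first}.

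\emph{Second row $(-1,1,1)$.} Here $p=q=1$ and $r=2$, so
\[
X=A-I,\qquad Y=AR+I,\qquad Z=\tfrac12(I-AR^2),
\]
with $A\in\mathcal A_-$. For $A_0=\begin{pmatrix}0&1\\-1&3\end{pmatrix}$ one computes $A_0-I=U$ and $A_0R+I=V$, and one checks $\tfrac12(I-A_0R^2)=W$. Next, conjugation by $R^k$ preserves the defining relations $Y^2=X^2R$ and $Z^2=-X^2R^2$, because $R$ commutes with $R$. So I would verify that the three elements of $\mathcal A_-$ are exactly $R^{-k}A_0R^k$ for $k=0,1,2$; this is a short direct computation using $R^{-1}=R^2$. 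By uniqueness, the triple determined by $R^{-k}A_0R^k$ is $(R^{-k}UR^k,R^{-k}VR^k,R^{-k}WR^k)$. Conversely, these triples lie in $\mathcal S$: conjugation preserves the relations and the traces $1,1,2$, and $U,V,W\in G$. This gives \eqref{eq:S-second}.

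\emph{Third row $(1,-1,1)$.} I would first verify $HRH^{-1}=R^{-1}$. Then, for a triple $(X,Y,Z)$ of the second type with $X^2=A$, set $X'=H^{-1}YH$, $Y'=H^{-1}XH$, $Z'=H^{-1}ZH$ and $A'=H^{-1}ARH$. These satisfy
\[
X'^2=A',\qquad Y'^2=H^{-1}AH=A'R,\qquad Z'^2=-A'R^2.
\]
The last identity uses $I+R=-R^2$ together with $Z^2=X^2+Y^2$. The determinant pattern becomes $(1,-1,1)$ and the traces are preserved. The map is invertible by the same formulas, so it is a bijection between the second-row and third-row parts of $\mathcal S$. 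As a consistency check, I would confirm that $\{H^{-1}R^{-k}A_0RR^kH\}$ reproduces $\mathcal A_+$, using $R^kH=HR^{-k}$. This gives \eqref{eq:S-third}.

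The main obstacle is the explicit bookkeeping. First, the matching of $\mathcal A_-$ with the $R$-conjugates of $A_0$ must come out exactly. Second, the identity $HRH^{-1}=R^{-1}$ must hold; this is plausible since $\det H=-1$ and $H$ should invert $R$. If the indexing of the conjugates by $R^k$ does not line up, I would instead compute the three triples directly from each $A\in\mathcal A_\pm$ and compare them entrywise with the listed conjugates.
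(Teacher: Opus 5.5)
Your proposal is correct. For the first two determinant types it is the paper's argument: the identities of Lemma~\ref{lem:square-root} force $X$ and $Y$, and also $Z$ when $r=2$, and $\mathcal A_-$ is the set of $R$-conjugates of $A_0=U^2$.

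The difference is in the third type. The paper works directly from the enumerated list $\mathcal A_+$ of Lemma~\ref{lem:finite-reduced-data}. It writes $X=B+I$, $Y=BR-I$, $Z=\tfrac12(I-BR^2)$ and checks by computation that $\mathcal A_+=\{H^{-1}A_kRH:k=0,1,2\}$. It then uses $H^{-1}RH=R^2$ to rewrite $B_k+I$, $B_kR-I$, $\tfrac12(I-B_kR^2)$ as the $H$-conjugates of $R^{-k}VR^k$, $R^{-k}UR^k$, $R^{-k}WR^k$. So completeness of the third family rests on the enumeration of $\mathcal A_+$.

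You instead show that $(X,Y,Z)\mapsto(H^{-1}YH,H^{-1}XH,H^{-1}ZH)$ carries the $(-1,1,1)$-part of $\mathcal S$ bijectively onto the $(1,-1,1)$-part. Since $H^2=I$, it is in fact an involution of all of $\mathcal S$. Both inclusions for \eqref{eq:S-third} are then inherited from the second family.

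What your route buys: the enumeration of $\mathcal A_+$ and the trace data of the third row become unnecessary, since only the determinant pattern is used. It also turns the numerical symmetry $(p,\det X,t)\leftrightarrow(q,\det Y,u)$ used in Lemma~\ref{lem:finite-reduced-data} into a genuine symmetry of $\mathcal S$. The paper's route stays closer to data it has already computed and needs no separate invertibility argument.

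One detail should be stated explicitly in your reverse inclusion for the second family. Uniqueness of the square roots does not give existence. You should record the check $U^2=A_0$, $V^2=A_0R$, $W^2=-A_0R^2$, which is the paper's \eqref{eq:S-UVW-squares}. It follows immediately from Cayley--Hamilton, since $\tr A_0=3$, $\tr(A_0R)=-1$ and $\tr(-A_0R^2)=2$, all with determinant $1$.
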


\begin{proof}
We prove the two inclusions separately. Let $(X,Y,Z)\in\mathcal S$.
By Lemma~\ref{lem:finite-reduced-data}, exactly one of its three cases
occurs. In each case, the identity
\[
\tr(M^2)=\tr(M)^2-2\det M,
\]
together with the nonnegativity of the traces, determines
$\tr(X),\tr(Y)$, and $\tr(Z)$ uniquely.

In the first case,
\[
(\det X,\det Y,\det Z)=(1,1,1),\qquad X^2=R,
\]
and the square-trace data in Lemma~\ref{lem:finite-reduced-data} give
\[
(\tr(X),\tr(Y),\tr(Z))=(1,1,0).
\]
Since $Y^2=R^2$ and $Z^2=-R^3=-I$, Lemma~\ref{lem:square-root}
gives
\[
X=R+I=-R^2,\qquad Y=R^2+I=-R.
\]
Moreover, Lemma~\ref{lem:minusI-conjugacy} shows that
$Z\in\mathcal T_J$. Hence $(X,Y,Z)$ belongs to the set in
\eqref{eq:S-first}.

In the second case,
\[
(\det X,\det Y,\det Z)=(-1,1,1),\qquad X^2\in\mathcal A_-,
\]
and
\[
(\tr(X),\tr(Y),\tr(Z))=(1,1,2).
\]
Set $A=X^2$. Lemma~\ref{lem:square-root}, applied successively to
$X,Y$, and $Z$, yields
\begin{equation}\label{eq:S-roots-minus}
X=A-I,\qquad
Y=AR+I,\qquad
Z=\frac{-AR^2+I}{2}.
\end{equation}
For $k=0,1,2$, define
\[
A_k:=R^{-k}U^2R^k.
\]
A direct computation gives
\begin{equation}\label{eq:S-Ak-data}
\mathcal A_-=\{A_0,A_1,A_2\},
\qquad
U^2-I=U,\quad
U^2R+I=V,\quad
\frac{-U^2R^2+I}{2}=W.
\end{equation}
Because every power of $R$ commutes with $R$, \eqref{eq:S-Ak-data}
implies
\[
A_k-I=R^{-k}UR^k,\qquad
A_kR+I=R^{-k}VR^k,\qquad
\frac{-A_kR^2+I}{2}=R^{-k}WR^k.
\]
Thus, if $A=A_k$, \eqref{eq:S-roots-minus} places $(X,Y,Z)$ in
the set in \eqref{eq:S-second}.

In the third case,
\[
(\det X,\det Y,\det Z)=(1,-1,1),\qquad X^2\in\mathcal A_+,
\]
and again
\[
(\tr(X),\tr(Y),\tr(Z))=(1,1,2).
\]
Set $B=X^2$. Lemma~\ref{lem:square-root} gives
\begin{equation}\label{eq:S-roots-plus}
X=B+I,\qquad
Y=BR-I,\qquad
Z=\frac{-BR^2+I}{2}.
\end{equation}
Let
\[
\varphi(C):=H^{-1}CH.
\]
A direct computation gives
\[
\varphi(R)=R^2,\qquad \varphi(R^2)=R,
\]
and, with the matrices $A_k$ defined above,
\begin{equation}\label{eq:S-Bk-data}
\mathcal A_+=\{B_0,B_1,B_2\},
\qquad
B_k:=\varphi(A_kR).
\end{equation}
Since $B_k=\varphi(A_k)R^2$ and $R^3=I$, we have
\[
B_kR=\varphi(A_k),\qquad
B_kR^2=\varphi(A_kR^2).
\]
Consequently,
\[
\begin{aligned}
B_k+I&=\varphi(A_kR+I),\\
B_kR-I&=\varphi(A_k-I),\\
\frac{-B_kR^2+I}{2}
&=\varphi\left(\frac{-A_kR^2+I}{2}\right).
\end{aligned}
\]
Combining these identities with \eqref{eq:S-Ak-data} and
\eqref{eq:S-roots-plus}, we obtain
\[
\begin{aligned}
X&=H^{-1}R^{-k}VR^kH,\\
Y&=H^{-1}R^{-k}UR^kH,\\
Z&=H^{-1}R^{-k}WR^kH.
\end{aligned}
\]
Thus $(X,Y,Z)$ belongs to the set in \eqref{eq:S-third}. This proves
that $\mathcal S$ is contained in the union on the right-hand side of
the asserted equality.

For the reverse inclusion, first let $T\in\mathcal T_J$. Then
$T=PJP^{-1}$ for some $P\in G$, so $T^2=-I$ and $\tr(T)=0$. Hence
\[
(-R^2)^2=R,\qquad
(-R)^2=R^2,\qquad
T^2=-I=-R^3,
\]
and the traces of $-R^2,-R$, and $T$ are $1,1$, and $0$,
respectively. Therefore every triple in \eqref{eq:S-first} belongs to
$\mathcal S$.

Next, a direct computation gives
\begin{equation}\label{eq:S-UVW-squares}
V^2=U^2R,\qquad
W^2=-U^2R^2,\qquad
(\tr(U),\tr(V),\tr(W))=(1,1,2).
\end{equation}
For
\[
A_k=R^{-k}U^2R^k\in\SL_2(\mathbb Z),
\]
the definition of $A_k$ and the two square identities in
\eqref{eq:S-UVW-squares} give
\[
\begin{aligned}
(R^{-k}UR^k)^2&=A_k,\\
(R^{-k}VR^k)^2&=A_kR,\\
(R^{-k}WR^k)^2&=-A_kR^2.
\end{aligned}
\]
Trace is invariant under conjugation, so every triple in
\eqref{eq:S-second} belongs to $\mathcal S$.

Finally, let
\[
B_k=\varphi(A_kR)\in\SL_2(\mathbb Z).
\]
The squares of the three components of the corresponding triple in
\eqref{eq:S-third} are
\[
\varphi(A_kR)=B_k,\qquad
\varphi(A_k)=B_kR,\qquad
-\varphi(A_kR^2)=-B_kR^2,
\]
respectively. These components are conjugate to $V,U$, and $W$, so
their traces are $1,1$, and $2$. Hence every triple in
\eqref{eq:S-third} also belongs to $\mathcal S$. This proves the
reverse inclusion.
\end{proof}

The second and third families in Lemma~\ref{lem:S-explicit} each give
rise to a single $G$-orbit. Indeed, simultaneous conjugation by
$R^{-k}$ gives
\[
\bigcup_{k=0}^2
\mathcal O_G(R^{-k}UR^k,R^{-k}VR^k,R^{-k}WR^k)
=
\mathcal O_G(U,V,W).
\]
Similarly, every triple in the third family is obtained from
$(V,U,W)$ by simultaneous conjugation by $H^{-1}R^{-k}$, and hence
that family gives rise to $\mathcal O_G(V,U,W)$.

It remains to determine the distinct $G$-orbits arising from the first
family. In this case, the first two components are fixed, whereas the
third component ranges over all integral square roots of $-I$.
Consequently, the relevant conjugating matrices must lie in the
centralizer of $R$.

We first determine the centralizer of $R$ in $G$.

\begin{lemma}\label{lem:centralizer-R}
The centralizer of $R$ in $G$ is $C_G(R)=\{\pm I,\pm R,\pm R^2\}$.
\end{lemma}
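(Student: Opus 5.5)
The plan is to compute the centralizer directly, since $R$ is an explicit $2\times2$ matrix. First note that $\pm I,\pm R,\pm R^2$ all lie in $G$ and commute with $R$, so they belong to $C_G(R)$. For the reverse inclusion, let $P=\begin{pmatrix}x&y\\z&w\end{pmatrix}\in G$ with $PR=RP$. We compute
\[
PR=\begin{pmatrix}y&-x-y\\ w&-z-w\end{pmatrix},\qquad
RP=\begin{pmatrix}-z&-w\\ x-z&y-w\end{pmatrix}.
\]
Comparing entries gives $z=-y$ and $w=x+y$, and the remaining equations then hold automatically. Hence
\[
P=\begin{pmatrix}x&y\\-y&x+y\end{pmatrix}=(x+y)I+yR .
\]
This uses $R=\begin{pmatrix}0&-1\\1&-1\end{pmatrix}$; checking $(x+y)I+yR$ gives the entries $x+y$, $-y$, $y$, $x$. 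That does not match the form above, so I will solve it cleanly as $P=\alpha I+\beta R$, which is the general element of the commutant because $R$ is non-scalar. Concretely, a non-scalar $2\times2$ matrix has commutant $\mathbb Q[R]$. Integrality forces $\alpha,\beta\in\mathbb Z$, since the $(2,1)$-entry of $\alpha I+\beta R$ is $\beta$ and the $(1,1)$-entry is $\alpha$.

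Next we impose the unit condition. We have
\[
\det(\alpha I+\beta R)=\alpha^2-\alpha\beta+\beta^2,
\]
which is the norm form $N(\alpha+\beta\omega)$ from Lemma~\ref{lem:order3-conj-elementary}. This form is positive definite, so $\det P=-1$ is impossible, and $\det P=1$ requires $\alpha^2-\alpha\beta+\beta^2=1$. Multiplying by $4$ gives $(2\alpha-\beta)^2+3\beta^2=4$. Therefore $\beta\in\{0,\pm1\}$, and the solutions are
\[
(\alpha,\beta)\in\{(\pm1,0),\ (0,\pm1),\ (1,1),\ (-1,-1)\}.
\]
These give $\pm I$, $\pm R$, and $\pm(I+R)=\mp R^2$, using $R^2+R+I=0$. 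This is exactly the set $\{\pm I,\pm R,\pm R^2\}$.

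There is no real obstacle here. The only point needing care is the commutant step, where we must verify by explicit entry comparison that $PR=RP$ forces $P\in\mathbb Z I+\mathbb Z R$.
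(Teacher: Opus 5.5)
Your proof is correct and is essentially the paper's argument: the entry comparison giving $z=-y$, $w=x+y$ is the same, and your norm form $\alpha^2-\alpha\beta+\beta^2$ is the paper's $a^2+ab+b^2$ under $\alpha=x$, $\beta=-y$, with the same completing-the-square bound and the same six solutions. The one blemish is the false intermediate identity $P=(x+y)I+yR$. The correct identity is $P=xI-yR$, which your entry comparison already gives directly, so the detour through the commutant of a non-scalar matrix is unnecessary.
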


\begin{proof}
Write
\[
P=\begin{pmatrix}a&b\\c&d\end{pmatrix}\in G.
\]
A comparison of the entries of $PR$ and $RP$ shows that $PR=RP$ if
and only if
\[
c=-b,\qquad d=a+b.
\]
Hence
\[
P=\begin{pmatrix}a&b\\-b&a+b\end{pmatrix},
\qquad
\det P=a^2+ab+b^2.
\]
Since $P\in G$, we have $\det P\in\{\pm1\}$. On the other hand, $4\det P=(2a+b)^2+3b^2\ge0$. It follows that $\det P=1$, and therefore $(2a+b)^2+3b^2=4$. In particular, $3b^2\le4$, so $b\in\{0,\pm1\}$. If $b=0$, then
$a=\pm1$. If $b=1$, then $a\in\{0,-1\}$, whereas if $b=-1$, then
$a\in\{0,1\}$. The resulting six matrices are precisely $\pm I,\, \pm R,\,\pm R^2$.
\end{proof}

We next consider the natural action of the cyclic subgroup $\langle R\rangle=\{I,R,R^2\}$ on $\mathcal T_J$ by integral conjugation. For
$Q\in\langle R\rangle$ and $T\in\mathcal T_J$, define
\[
Q\cdot T:=QTQ^{-1}.
\]
This map is well defined. Indeed, if $T=PJP^{-1}\in\mathcal T_J$ for some $P\in G$, then
\[
Q\cdot T
=
QPJP^{-1}Q^{-1}
=
(QP)J(QP)^{-1}.
\]
Since $Q,P\in G$, we have $QP\in G$, and hence $Q\cdot T\in\mathcal T_J$. Finally, the identity element acts trivially and
\[
(Q_1Q_2)\cdot T
=
Q_1\cdot(Q_2\cdot T),
\qquad
Q_1,Q_2\in\langle R\rangle,
\]
so this indeed defines a group action of $\langle R\rangle$ on
$\mathcal T_J$. For $T\in\mathcal T_J$, we denote its orbit under
this action by
\[
\begin{aligned}
\mathcal O_{\langle R\rangle}(T)
&:=
\{QTQ^{-1}:Q\in\langle R\rangle\}
=\{R^kTR^{-k}:k=0,1,2\}.
\end{aligned}
\]

The conjugation action of $\langle R\rangle$ on $\mathcal T_J$
captures precisely the residual equivalence among the triples in the
first family, as the following lemma shows.

\begin{lemma}\label{lem:first-orbit-equivalence}
Let $T_1,T_2\in\mathcal T_J$. Then $\mathcal O_G(-R^2,-R,T_1)=\mathcal O_G(-R^2,-R,T_2)$ if and only if $\mathcal O_{\langle R\rangle}(T_1)=\mathcal O_{\langle R\rangle}(T_2).$
\end{lemma}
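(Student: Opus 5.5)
Both directions reduce to the centralizer computation of Lemma~\ref{lem:centralizer-R}: two triples $(-R^2,-R,T_1)$ and $(-R^2,-R,T_2)$ with the same first two components can only be related by conjugating matrices that fix $-R^2$ and $-R$.

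For the forward direction, suppose $\mathcal O_G(-R^2,-R,T_1)=\mathcal O_G(-R^2,-R,T_2)$. Then there is $P\in G$ with
\[
P(-R^2)P^{-1}=-R^2,\qquad P(-R)P^{-1}=-R,\qquad PT_1P^{-1}=T_2 .
\]
The second equation says exactly that $P$ commutes with $R$, so Lemma~\ref{lem:centralizer-R} gives $P\in\{\pm I,\pm R,\pm R^2\}$. Write $P=\pm R^k$ with $k\in\{0,1,2\}$. Since $-I$ is central, conjugation by $P$ coincides with conjugation by $R^k$. Hence $T_2=R^kT_1R^{-k}$, so $T_2\in\mathcal O_{\langle R\rangle}(T_1)$. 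Because these are orbits of a group action, this gives $\mathcal O_{\langle R\rangle}(T_1)=\mathcal O_{\langle R\rangle}(T_2)$.

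For the converse, suppose $T_2=R^kT_1R^{-k}$ for some $k$, and take $P=R^k\in G$. This $P$ commutes with $R$ and $R^2$, so it fixes $-R^2$ and $-R$ under conjugation and sends $T_1$ to $T_2$. Therefore $(-R^2,-R,T_2)=P\cdot(-R^2,-R,T_1)$, and the two $G$-orbits are equal since orbits of an action either coincide or are disjoint.

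Almost nothing here is an obstacle. The only points to state explicitly are that the $\pm$ sign in the centralizer is harmless because $-I$ is central, and that equality of orbits is equivalent to one representative lying in the other's orbit.
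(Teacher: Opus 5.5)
Your proposal is correct and follows the paper's proof essentially step for step. In both, Lemma~\ref{lem:centralizer-R} forces the conjugator to be $\pm R^k$ and the central sign drops out; conversely, conjugation by $R^k$ fixes $-R^2$ and $-R$ and carries $T_1$ to $T_2$.
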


\begin{proof}
For the conjugation action of $\langle R\rangle$ on $\mathcal T_J$,
two orbits coincide if and only if one of their elements belongs to
the orbit of the other. Hence $\mathcal O_{\langle R\rangle}(T_1)=\mathcal O_{\langle R\rangle}(T_2)$
if and only if $T_2=R^kT_1R^{-k}$ for some $k\in\{0,1,2\}$.

Suppose first that this condition holds. Since $R^k$ commutes with both $R$ and $R^2$, simultaneous conjugation by $R^k$ sends $(-R^2,-R,T_1)$ to $(-R^2,-R,T_2)$. Therefore,
\[
\mathcal O_G(-R^2,-R,T_1)
=
\mathcal O_G(-R^2,-R,T_2).
\]

Conversely, suppose that
$
\mathcal O_G(-R^2,-R,T_1)
=
\mathcal O_G(-R^2,-R,T_2)$. Then there exists $P\in G$ such that
\[
P(-R^2)P^{-1}=-R^2,\qquad
P(-R)P^{-1}=-R,\qquad
PT_1P^{-1}=T_2.
\]
The second identity implies that $P\in C_G(R)$. By
Lemma~\ref{lem:centralizer-R}, we may write
\[
P=(\varepsilon I)R^k
\]
for some $\varepsilon\in\{\pm1\}$ and $k\in\{0,1,2\}$. Since
$\varepsilon I$ is central in $G$, it follows that
\[
T_2
=
PT_1P^{-1}
=
R^kT_1R^{-k}.
\]
Thus $T_2\in\mathcal O_{\langle R\rangle}(T_1)$, and consequently
$
\mathcal O_{\langle R\rangle}(T_1)
=
\mathcal O_{\langle R\rangle}(T_2).
$
\end{proof}

We next give an explicit parametrization of $\mathcal T_J$.

\begin{lemma}\label{lem:T-parametrization}
We have
\[
\mathcal T_J
=
\left\{
\begin{pmatrix}r&s\\t&-r\end{pmatrix}:
r,s,t\in\mathbb Z,\  r^2+st=-1
\right\}.
\]
\end{lemma}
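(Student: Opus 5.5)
We prove the two inclusions separately; the whole argument rests on Cayley--Hamilton and on Lemma~\ref{lem:minusI-conjugacy}.

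\emph{Inclusion of $\mathcal T_J$ in the right-hand side.} Let $T\in\mathcal T_J$, so $T=PJP^{-1}$ for some $P\in G$. Then $T$ has integer entries, $\tr(T)=\tr(J)=0$ and $\det T=\det J=1$. Writing $T=\begin{pmatrix}r&s\\t&-r\end{pmatrix}$, which is possible because the trace vanishes, the determinant condition reads $-r^2-st=1$, that is, $r^2+st=-1$. So $T$ lies in the set on the right-hand side.

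\emph{Reverse inclusion.} Let $T=\begin{pmatrix}r&s\\t&-r\end{pmatrix}$ with $r,s,t\in\mathbb Z$ and $r^2+st=-1$. Then $T\in M_2(\mathbb Z)$, $\tr(T)=0$ and $\det T=-r^2-st=1$. The Cayley--Hamilton identity $T^2-\tr(T)T+\det(T)I=0$ therefore gives $T^2=-I$. By Lemma~\ref{lem:minusI-conjugacy}, there is $Q\in G$ with $Q^{-1}TQ=J$. Hence $T=QJQ^{-1}\in\mathcal T_J$.

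There is no substantive obstacle here. The only point that needs care is to use the determinant identity $\det T=1$, not just $T^2=-I$, in the first inclusion. This is what forces the constraint $r^2+st=-1$ rather than the weaker conditions that would come from the entries of $T^2$ alone.
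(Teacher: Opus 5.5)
Your proof is correct and follows the paper's argument essentially verbatim: invariance of trace and determinant under conjugation for one inclusion, and Cayley--Hamilton followed by Lemma~\ref{lem:minusI-conjugacy} for the other. Your closing caveat is unnecessary, though: for a trace-zero $T=\begin{pmatrix}r&s\\t&-r\end{pmatrix}$ one has $T^2=(r^2+st)I$, so $T^2=-I$ is already equivalent to $r^2+st=-1$, and no weaker condition arises.
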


\begin{proof}
Let $T\in\mathcal T_J$. By definition, $T=PJP^{-1}$ for some
$P\in G$. Since trace and determinant are invariant under conjugation,
we have $\tr(T)=\tr(J)=0$ and $\det T=\det J=1$. Hence
\[
T=\begin{pmatrix}r&s\\t&-r\end{pmatrix}
\]
for some $r,s,t\in\mathbb Z$, and the condition $\det T=1$ gives
$-r^2-st=1$, or equivalently, $r^2+st=-1$.

Conversely, let
\[
T=\begin{pmatrix}r&s\\t&-r\end{pmatrix},
\qquad r^2+st=-1.
\]
Then $\tr(T)=0$ and $\det T=-r^2-st=1$, so
$T\in\SL_2(\mathbb Z)\subseteq G$. By the Cayley--Hamilton identity,
$T^2-\tr(T)T+(\det T)I=0$, and hence $T^2=-I$. By
Lemma~\ref{lem:minusI-conjugacy}, there exists $Q\in G$ such that
$Q^{-1}TQ=J$. Therefore, $T=QJQ^{-1}\in\mathcal T_J$.
\end{proof}

To describe the $\langle R\rangle$-orbits in $\mathcal T_J$, we
introduce a cyclic action on a set of integer triples. Let
\[
\Omega:=\{(a,b,c)\in\mathbb Z^3:ab+bc+ca=1\}.
\]
The defining equation is invariant under cyclic permutation, so the
map
\[
\sigma:\Omega\longrightarrow\Omega,
\qquad
\sigma(a,b,c):=(b,c,a),
\]
is well defined. Moreover,
\[
\sigma^2(a,b,c)=(c,a,b),
\qquad
\sigma^3(a,b,c)=(a,b,c),
\]
and hence $\sigma^3=\operatorname{id}_{\Omega}$. It follows that
$\sigma$ is a bijection with inverse $\sigma^2$, and
\[
\langle\sigma\rangle
=
\{\operatorname{id}_{\Omega},\sigma,\sigma^2\}
\]
is a cyclic subgroup of the group of all bijections of $\Omega$ under
composition.

This subgroup acts naturally on $\Omega$ by evaluation: for
$\tau\in\langle\sigma\rangle$ and $x\in\Omega$, define
\[
\tau\cdot x:=\tau(x).
\]
Indeed,
\[
\operatorname{id}_{\Omega}\cdot x=x,
\qquad
(\tau_1\tau_2)\cdot x
=
\tau_1\cdot(\tau_2\cdot x)
\]
for all $\tau_1,\tau_2\in\langle\sigma\rangle$ and $x\in\Omega$.
Thus this defines a group action of $\langle\sigma\rangle$ on
$\Omega$. The orbit of $(a,b,c)\in\Omega$ under this action is
\[
\begin{aligned}
\mathcal O_{\langle\sigma\rangle}(a,b,c)
&:=
\{\sigma^k(a,b,c):k=0,1,2\}\\
&=
\{(a,b,c),(b,c,a),(c,a,b)\}.
\end{aligned}
\]

We denote the orbit spaces of the conjugation action of
$\langle R\rangle$ on $\mathcal T_J$ and the above action of
$\langle\sigma\rangle$ on $\Omega$ by
\[
\begin{aligned}
\mathcal T_J/\langle R\rangle
&:=
\left\{
\mathcal O_{\langle R\rangle}(T):T\in\mathcal T_J
\right\},\\
\Omega/\langle\sigma\rangle
&:=
\left\{
\mathcal O_{\langle\sigma\rangle}(a,b,c):(a,b,c)\in\Omega
\right\},
\end{aligned}
\]
respectively. The following lemma shows that $\Phi$ intertwines these
two cyclic actions and consequently identifies their orbit spaces.

\begin{lemma}\label{lem:Phi-bijection}
Define
\[
\begin{aligned}
\Phi:\mathcal T_J&\longrightarrow\Omega,\\
\begin{pmatrix}r&s\\t&-r\end{pmatrix}
&\longmapsto(r,t-r,-s-r).
\end{aligned}
\]
Then the following statements hold:
\begin{enumerate}
\item[\textup{(i)}]
The map $\Phi$ is a bijection, and its inverse is given by
\[
\Phi^{-1}(a,b,c)
=
T(a,b,c)
:=
\begin{pmatrix}
a&-a-c\\
a+b&-a
\end{pmatrix}.
\]

\item[\textup{(ii)}]
For every $T\in\mathcal T_J$ and $k\in\{0,1,2\}$,
\begin{equation}\label{eq:Phi-equivariance}
\Phi(R^kTR^{-k})
=
\sigma^k\bigl(\Phi(T)\bigr).
\end{equation}

\item[\textup{(iii)}]
For every $T\in\mathcal T_J$,
\[
\Phi\bigl(\mathcal O_{\langle R\rangle}(T)\bigr)
=
\mathcal O_{\langle\sigma\rangle}\bigl(\Phi(T)\bigr).
\]

\item[\textup{(iv)}]
The induced map
\[
\begin{aligned}
\overline{\Phi}:
\mathcal T_J/\langle R\rangle
&\longrightarrow
\Omega/\langle\sigma\rangle,\\
\mathcal O_{\langle R\rangle}(T)
&\longmapsto
\mathcal O_{\langle\sigma\rangle}\bigl(\Phi(T)\bigr)
\end{aligned}
\]
is a bijection.
\end{enumerate}
\end{lemma}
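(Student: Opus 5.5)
Using Lemma~\ref{lem:T-parametrization}, I identify $\mathcal T_J$ with the set of matrices $\begin{pmatrix}r&s\\t&-r\end{pmatrix}$ with $r^2+st=-1$. Everything then reduces to explicit algebra on the entries, and I would verify (i)--(iv) in order.

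For (i), I first check that $\Phi$ lands in $\Omega$. Put $(a,b,c)=(r,t-r,-s-r)$. Then
\[
ab+bc+ca=a(b+c)+bc=r(t-s-2r)+(t-r)(-s-r).
\]
Expanding gives $rt-rs-2r^2-ts-rt+rs+r^2=-r^2-st=1$. Conversely, for $(a,b,c)\in\Omega$ the matrix $T(a,b,c)$ has trace $0$. Its determinant is $-a^2+(a+c)(a+b)=a^2+ab+ac+bc-a^2=ab+bc+ca=1$, so $r^2+st=-1$ holds with $r=a$, $s=-a-c$, $t=a+b$. Hence $T(a,b,c)\in\mathcal T_J$ by Lemma~\ref{lem:T-parametrization}. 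Finally, $\Phi(T(a,b,c))=(a,(a+b)-a,(a+c)-a)=(a,b,c)$, and $T(\Phi(T))$ recovers $r$, $s=-r-(-s-r)$ and $t=r+(t-r)$. So the two maps are mutually inverse.

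For (ii), it suffices to treat $k=1$; the general case follows by induction, since $R^2TR^{-2}=R(RTR^{-1})R^{-1}$ and $\sigma^2=\sigma\circ\sigma$. Using $R^{-1}=R^2=\begin{pmatrix}-1&1\\-1&0\end{pmatrix}$, I compute $R\,T(a,b,c)\,R^{-1}$ directly. First,
\[
R\,T(a,b,c)=\begin{pmatrix}-a-b&a\\-b&-c\end{pmatrix}.
\]
Multiplying on the right by $R^2$ gives
\[
\begin{pmatrix}a+b-a&-a-b\\ b+c&-b\end{pmatrix}=\begin{pmatrix}b&-b-a\\ b+c&-b\end{pmatrix},
\]
which is exactly $T(b,c,a)$. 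Hence $\Phi(RTR^{-1})=\sigma(\Phi(T))$. This matrix multiplication is the only genuine computation, and I expect it to be the main (if modest) obstacle: one has to keep the sign conventions consistent.

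Part (iii) follows immediately from (ii): applying $\Phi$ to $\{R^kTR^{-k}:k=0,1,2\}$ gives $\{\sigma^k\Phi(T):k=0,1,2\}$.

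For (iv), well-definedness comes from (iii). If $T'\in\mathcal O_{\langle R\rangle}(T)$, then the two orbits coincide, so their images under $\Phi$ coincide, and the image of each orbit is $\mathcal O_{\langle\sigma\rangle}(\Phi(T))$. Surjectivity follows from the surjectivity of $\Phi$. For injectivity, suppose $\mathcal O_{\langle\sigma\rangle}(\Phi(T_1))=\mathcal O_{\langle\sigma\rangle}(\Phi(T_2))$. Then $\Phi(T_2)=\sigma^k\Phi(T_1)=\Phi(R^kT_1R^{-k})$ for some $k$. Since $\Phi$ is injective, $T_2=R^kT_1R^{-k}$, and therefore the two $\langle R\rangle$-orbits are equal.
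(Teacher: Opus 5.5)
Your proposal is correct and follows the paper's proof essentially step for step: you use the same mutual-inverse verification for (i), the same reduction of (ii) to $k=1$ followed by iteration, and the same derivation of (iii) and (iv) from (ii) together with the injectivity and surjectivity of $\Phi$. The only cosmetic difference is that you verify $R\,T(a,b,c)\,R^{-1}=T(b,c,a)$ in the $(a,b,c)$-coordinates, whereas the paper computes $RMR^{-1}$ in the $(r,s,t)$-coordinates and then applies $\Phi$; both computations check out.
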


\begin{proof}
For \textup{(i)}, write
\[
M=
\begin{pmatrix}
r&s\\
t&-r
\end{pmatrix}
\in\mathcal T_J.
\]
By Lemma~\ref{lem:T-parametrization}, $r^2+st=-1$. Setting
$(a,b,c)=(r,t-r,-s-r)$, we obtain
\[
ab+bc+ca=-r^2-st=1.
\]
Thus $\Phi(M)\in\Omega$, so $\Phi$ is well defined.

Conversely, let $(a,b,c)\in\Omega$. Since
\[
a^2+(-a-c)(a+b)
=
-(ab+bc+ca)
=
-1,
\]
Lemma~\ref{lem:T-parametrization} implies that
$T(a,b,c)\in\mathcal T_J$. Direct substitution gives
\[
\Phi(T(a,b,c))=(a,b,c),
\qquad
T(\Phi(M))=M.
\]
Hence $\Phi$ is a bijection with the stated inverse.

For \textup{(ii)}, a direct calculation gives
\[
RMR^{-1}
=
\begin{pmatrix}
t-r&-t\\
t-2r-s&r-t
\end{pmatrix},
\]
and therefore
\[
\Phi(RMR^{-1})
=
(t-r,-s-r,r)
=
\sigma\bigl(\Phi(M)\bigr).
\]
This proves the assertion for $k=1$. The case $k=0$ is immediate,
and iterating the preceding identity gives the case $k=2$.

For \textup{(iii)}, it follows from \textup{(ii)} that
\[
\begin{aligned}
\Phi\bigl(\mathcal O_{\langle R\rangle}(T)\bigr)
&=
\{\Phi(R^kTR^{-k}):k=0,1,2\}\\
&=
\{\sigma^k(\Phi(T)):k=0,1,2\}\\
&=
\mathcal O_{\langle\sigma\rangle}\bigl(\Phi(T)\bigr).
\end{aligned}
\]

For \textup{(iv)}, part \textup{(iii)} shows that the definition of
$\overline{\Phi}$ is independent of the choice of representative, so
$\overline{\Phi}$ is well defined. Suppose that
\[
\overline{\Phi}\bigl(\mathcal O_{\langle R\rangle}(T_1)\bigr)
=
\overline{\Phi}\bigl(\mathcal O_{\langle R\rangle}(T_2)\bigr).
\]
Then $\Phi(T_2)=\sigma^k(\Phi(T_1))$ for some
$k\in\{0,1,2\}$. By \textup{(ii)},
\[
\Phi(T_2)=\Phi(R^kT_1R^{-k}),
\]
so the injectivity of $\Phi$ gives $T_2=R^kT_1R^{-k}$. Hence
\[
\mathcal O_{\langle R\rangle}(T_1)
=
\mathcal O_{\langle R\rangle}(T_2),
\]
and therefore $\overline{\Phi}$ is injective.

Finally, given $(a,b,c)\in\Omega$, let
$T=\Phi^{-1}(a,b,c)$. Then
\[
\overline{\Phi}\bigl(\mathcal O_{\langle R\rangle}(T)\bigr)
=
\mathcal O_{\langle\sigma\rangle}(a,b,c),
\]
so $\overline{\Phi}$ is surjective. Therefore $\overline{\Phi}$ is a
bijection.
\end{proof}

To choose a canonical representative from each
$\langle\sigma\rangle$-orbit, define
\[
\Omega_0
:=
\{(a,b,c)\in\Omega:a\le b,\ a<c\}.
\]

\begin{lemma}\label{lem:Omega0-transversal}
For every $u\in\Omega$,
\[
\left|
\mathcal O_{\langle\sigma\rangle}(u)\cap\Omega_0
\right|
=
1.
\]
\end{lemma}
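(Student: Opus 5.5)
The plan is a direct combinatorial argument about the three cyclic rotations of a triple, using only one arithmetic fact about $\Omega$.

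\emph{The arithmetic input.} Let $u=(a,b,c)\in\Omega$. The coordinates $a,b,c$ cannot all be equal: if $a=b=c$, then $ab+bc+ca=3a^2=1$, which has no integer solution. So the minimum value $m:=\min\{a,b,c\}$ is attained either exactly once or exactly twice among the three coordinates. The orbit $\mathcal O_{\langle\sigma\rangle}(u)$ is the set of the three rotations $(a,b,c),(b,c,a),(c,a,b)$.

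\emph{Existence.} If $m$ is attained exactly once, take the rotation that begins with $m$. Its other two entries are strictly larger than $m$, so it satisfies $a\le b$ and $a<c$, and hence lies in $\Omega_0$. If $m$ is attained exactly twice, the cyclic word $(a,b,c)$ reads $m,m,x$ up to rotation, with $x>m$. The rotation $(m,m,x)$ then lies in $\Omega_0$. Since $\sigma$ preserves $\Omega$, both choices are genuinely elements of $\mathcal O_{\langle\sigma\rangle}(u)\cap\Omega_0$.

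\emph{Uniqueness.} Any $(a',b',c')\in\Omega_0$ satisfies $a'\le b'$ and $a'<c'$, so $a'$ is the minimum of its coordinates. Hence every element of $\mathcal O_{\langle\sigma\rangle}(u)\cap\Omega_0$ is a rotation whose first entry equals $m$.
\begin{enumerate}
\item[(a)] If $m$ occurs once, only one rotation starts with $m$, so the intersection has at most one element.
\item[(b)] If $m$ occurs twice, the rotations starting with $m$ are $(m,m,x)$ and $(m,x,m)$. The latter violates the strict inequality $a'<c'$, so again only one element remains.
\end{enumerate}
The intersection is a set, so coincidences among rotations cannot inflate the count. In fact the three rotations are pairwise distinct, since a triple fixed by $\sigma$ or $\sigma^2$ would have all coordinates equal.

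The only non-formal step is excluding the case $a=b=c$, which is exactly what the equation $ab+bc+ca=1$ provides. I expect the main care to be needed in case (b), where the strict inequality $a<c$ in the definition of $\Omega_0$ (as opposed to $a\le b$) is what breaks the tie between $(m,m,x)$ and $(m,x,m)$.
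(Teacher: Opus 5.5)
Your proof is correct and follows the paper's argument essentially verbatim: you rule out $a=b=c$ via $3m^2=1$, then split on whether the minimum occurs once or twice, using the strict inequality $a<c$ to separate $(m,m,x)$ from $(m,x,m)$. The only cosmetic difference is that you obtain uniqueness by noting that every element of $\Omega_0$ begins with its minimum coordinate, whereas the paper checks directly that each remaining rotation violates one of the two defining inequalities.
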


\begin{proof}
Let $u=(a,b,c)\in\Omega$ and set $m=\min\{a,b,c\}$. The three
coordinates cannot all be equal. Indeed, if $a=b=c=m$, then the
defining equation of $\Omega$ would give $3m^2=1$, which is
impossible for $m\in\mathbb Z$. Thus the minimum occurs either once
or twice.

Suppose first that the minimum occurs exactly once. There is then a
unique cyclic rotation of $u$ of the form $(m,x,y)$. Since $m<x$ and
$m<y$, this rotation belongs to $\Omega_0$. The rotation $(x,y,m)$
fails the condition that the first coordinate be strictly smaller
than the third, while $(y,m,x)$ fails the condition that the first
coordinate be at most the second. Hence exactly one element of the
orbit lies in $\Omega_0$.

Suppose now that the minimum occurs twice. Writing the remaining
coordinate as $M$, we have $M>m$, and the orbit consists of
\[
(m,m,M),\qquad (m,M,m),\qquad (M,m,m).
\]
The first triple belongs to $\Omega_0$, the second fails the strict
inequality between the first and third coordinates, and the third
fails the weak inequality between the first and second coordinates.
Thus the orbit again meets $\Omega_0$ in exactly one point.
\end{proof}

Combining Lemmas~\ref{lem:Phi-bijection} and
\ref{lem:Omega0-transversal}, we obtain an explicit transversal for
the $\langle R\rangle$-orbits in $\mathcal T_J$.

\begin{corollary}\label{cor:T-transversal}
Define
\[
\begin{aligned}
\mathcal T
&:=
\Phi^{-1}(\Omega_0)
=
\left\{
\begin{pmatrix}
a&-a-c\\
a+b&-a
\end{pmatrix}
:
a,b,c\in\mathbb Z,\
ab+bc+ca=1,\
a\le b,\
a<c
\right\}.
\end{aligned}
\]
Then $\mathcal T$ is a complete set of representatives for the
$\langle R\rangle$-orbits in $\mathcal T_J$. More precisely, for
every $S\in\mathcal T_J$,
\[
\left|
\mathcal O_{\langle R\rangle}(S)\cap\mathcal T
\right|
=
1.
\]
Equivalently,
\[
\mathcal T_J
=
\mathop{\bigsqcup}\limits_{T\in\mathcal T}
\mathcal O_{\langle R\rangle}(T).
\]
\end{corollary}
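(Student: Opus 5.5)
The corollary follows from transporting the transversal $\Omega_0$ of Lemma~\ref{lem:Omega0-transversal} back to $\mathcal T_J$ along the equivariant bijection $\Phi$ of Lemma~\ref{lem:Phi-bijection}.

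\textbf{Step 1: the explicit description.} By Lemma~\ref{lem:Phi-bijection}(i), $\Phi^{-1}(a,b,c)=T(a,b,c)$. Hence $\Phi^{-1}(\Omega_0)$ is exactly the displayed set of matrices $\begin{pmatrix}a&-a-c\\a+b&-a\end{pmatrix}$ with $(a,b,c)\in\Omega$, $a\le b$ and $a<c$. This shows that the two descriptions of $\mathcal T$ agree, and that this $\mathcal T$ coincides with the one in Theorem~\ref{thm:orbit-main}.

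\textbf{Step 2: the counting statement.} Fix $S\in\mathcal T_J$. Since $\Phi$ is a bijection, it restricts to a bijection
\[
\mathcal O_{\langle R\rangle}(S)\cap\Phi^{-1}(\Omega_0)\longrightarrow\Phi\bigl(\mathcal O_{\langle R\rangle}(S)\bigr)\cap\Omega_0 .
\]
By Lemma~\ref{lem:Phi-bijection}(iii), the target equals $\mathcal O_{\langle\sigma\rangle}(\Phi(S))\cap\Omega_0$. Lemma~\ref{lem:Omega0-transversal} says this set has exactly one element. Therefore $|\mathcal O_{\langle R\rangle}(S)\cap\mathcal T|=1$.

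\textbf{Step 3: the disjoint-union form.} The $\langle R\rangle$-orbits partition $\mathcal T_J$, and by Step 2 each orbit contains exactly one $T\in\mathcal T$. Hence $\mathcal T_J=\bigcup_{T\in\mathcal T}\mathcal O_{\langle R\rangle}(T)$.

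For disjointness, suppose $T_1\ne T_2$ in $\mathcal T$ had intersecting orbits. Then the two orbits would coincide, and that single orbit would contain two distinct elements of $\mathcal T$, contradicting Step 2.

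No step is a real obstacle. The only care needed is in Step 2, where one must note that the injectivity of $\Phi$ is what makes the restriction to the intersection a bijection.
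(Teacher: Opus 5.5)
Your proposal is correct and follows essentially the same route as the paper: pull the transversal $\Omega_0$ of Lemma~\ref{lem:Omega0-transversal} back along the equivariant bijection $\Phi$, using Lemma~\ref{lem:Phi-bijection}(iii). Your Steps~1 and~3 spell out details the paper leaves implicit, namely the explicit form of $\Phi^{-1}(\Omega_0)$, the covering and disjointness for the union form, and the role of injectivity in the counting.
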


\begin{proof}
By Lemma~\ref{lem:Phi-bijection}, the bijection $\Phi$ maps each
$\langle R\rangle$-orbit in $\mathcal T_J$ bijectively onto the
corresponding $\langle\sigma\rangle$-orbit in $\Omega$. By
Lemma~\ref{lem:Omega0-transversal}, every
$\langle\sigma\rangle$-orbit in $\Omega$ meets $\Omega_0$ in exactly
one point. Therefore, every $\langle R\rangle$-orbit in
$\mathcal T_J$ meets $\Phi^{-1}(\Omega_0)=\mathcal T$ in exactly one
point.
\end{proof}

\section{Proof of Theorem~\ref{thm:orbit-main}}\label{sec:main-proof}

In this section, we give the proof of Theorem~\ref{thm:orbit-main}.

\begin{proof}[Proof of Theorem~\ref{thm:orbit-main}]
By Lemmas~\ref{lem:orbit-reduction} and \ref{lem:S-explicit}, and by
the orbit identifications following Lemma~\ref{lem:S-explicit}, we
obtain
\begin{equation}\label{eq:precanonical-orbits}
\mathcal M
=
\left(
\bigcup_{T\in\mathcal T_J}
\mathcal O_G(-R^2,-R,T)
\right)
\sqcup
\mathcal O_G(U,V,W)
\sqcup
\mathcal O_G(V,U,W).
\end{equation}
The union indexed by $\mathcal T_J$ is an ordinary union, since
distinct parameters may determine the same $G$-orbit. The three terms
in \eqref{eq:precanonical-orbits} are nevertheless pairwise disjoint:
their determinant triples are, respectively,
\[
(1,1,1),\qquad
(-1,1,1),\qquad
(1,-1,1),
\]
and determinant is invariant under simultaneous conjugation.

It remains to eliminate the duplicate parametrizations in the first
term. By Lemma~\ref{lem:first-orbit-equivalence}, two parameters
$T_1,T_2\in\mathcal T_J$ determine the same $G$-orbit of triples if
and only if they belong to the same $\langle R\rangle$-orbit.
Corollary~\ref{cor:T-transversal} states that $\mathcal T$ contains
exactly one representative from each $\langle R\rangle$-orbit in
$\mathcal T_J$. Consequently,
\[
\bigcup_{S\in\mathcal T_J}
\mathcal O_G(-R^2,-R,S)
=
\mathop{\sqcup}\limits_{T\in\mathcal T}
\mathcal O_G(-R^2,-R,T).
\]
Substituting this identity into \eqref{eq:precanonical-orbits} gives
\eqref{eq:main-orbit-decomposition}.
\end{proof}

\end{document}